\def\revisedtext#1{ #1}
\newcommand{\R}{\mathbb{R}}
\definecolor{shadecolor}{gray}{0.90}
\declaretheoremstyle[
headfont=\normalfont\bfseries,
notefont=\mdseries, notebraces={(}{)},
bodyfont=\normalfont,
postheadspace=0.5em,
spaceabove=1pt,
mdframed={
  skipabove=8pt,
  skipbelow=8pt,
  hidealllines=true,
  backgroundcolor={shadecolor},
  innerleftmargin=4pt,
  innerrightmargin=4pt}
]{shaded}
\declaretheorem[style=shaded,within=section]{definition}
\declaretheorem[style=shaded,sibling=definition]{proposition}
\declaretheorem[style=shaded,sibling=definition]{corollary}
\begin{document}
%\doi{10.1080/1055678xxxxxxxxxxxxx}
%\issn{1029-4937}
%\issnp{1055-6788}
%\jvol{00} \jnum{00} \jyear{2009} \jmonth{January}

%\markboth{R.M. Gower and M.P. Mello}{Optimization Methods and Software}

%\articletype{GUIDE}

\title{A new framework for the computation of Hessians\footnote{This work was published in 2012~\cite{Gower:2012}}}

\author{R. M. Gower$^{\rm a}$ $^{\ast}$\footnote{$^\ast$Corresponding author. Email: gowerrobert@gmail.com. Partially supported by
CNPq and FAPESP (Grant 2009/04785-7)} \\
 State University of Campinas
  \and M. P. Mello$^{\rm a}$ $^{\dagger}$\footnote{$\dagger$Partially supported by CNPq-PRONEX Optimization and FAPESP (Grant
2006/53768-0).}\\ State University of Campinas}
%\received{September 29, 2010} 

\date{15 August 2011}
\maketitle

\begin{abstract}
We investigate the computation of Hessian matrices via Automatic Differentiation, using a graph model and an
algebraic model. The graph model reveals the inherent symmetries involved in calculating the Hessian. The algebraic model, based
on Griewank and Walther's state transformations~\cite{Griewank:2008}, synthesizes the calculation of the Hessian as a formula.
These dual points of view, graphical and algebraic, lead to a new framework for Hessian computation. This is illustrated by
developing \texttt{edge\_pushing}, a new truly reverse Hessian computation algorithm that fully exploits the Hessian's symmetry.
Computational experiments compare the performance of \texttt{edge\_pushing} on sixteen functions from the CUTE collection \cite{Cute} against two algorithms available as drivers of the software ADOL-C \cite{ADOL-C,AndreaSparsity,GebreHessianAuto}, and the results are very promising.\bigskip

%\begin{keywords}Hessian matrix; automatic differentiation; reverse computation; sparse matrices\end{keywords}
%\begin{classcode}65K10; 65D25; 65F50 \end{classcode}\bigskip

\end{abstract}

\section{Introduction}

Within the context of nonlinear optimization, algorithms that use variants of Newton's method
 must repeatedly calculate or obtain approximations of the Hessian matrix or Hessian-vector products. 
Interior-point methods, ubiquitous in nonlinear solvers~\cite{Forsgreninteriormethods}, fall in this category.
While the nonlinear optimization package LOQO~\cite{VanderbeiInteriorPoint} requires  
that the user supply the Hessian, IPOPT~\cite{BieglerLineSearch} and KNITRO~\cite{knitro} are more flexible, but
also use Hessian information of some kind or other. Experience indicates that optimization algorithms that employ first order
derivatives perform fewer iterations given exact gradients, as opposed to numerically approximated ones. Although there is not an
equivalent consensus concerning second order derivatives, it is natural to suspect the same would hold true for algorithms that
use Hessians. Thus the need to efficiently calculate exact (up to machine precision) Hessian matrices is driven by the rising
popularity of optimization methods that take advantage of second-order information.

Automatic Differentiation (AD) has had a lot of success in calculating gradients and Hessian-vector products with reverse AD
procedures~\cite{Christianson:1992}\footnote{Reverse in the sense that  the order of evaluation is opposite to the
order employed in calculating a function value.} that have the same time complexity as that of evaluating the underlying
function.

Attempts to efficiently calculate the entire Hessian matrix date back to the work of Jackson and McCormick
\cite{McCormickJacksonFactorable}, based on Jackson's dissertation.
Their work was followed by increasingly intense research in this area, no doubt helped along by the advances in hardware and
software. Since the beginning, exploring sparsity and symmetry were at the forefront of efficiency related issues. Nowadays we can
discern a variety of strategies in the literature, regarding how to properly take these into account. The authors
of~\cite{McCormickJacksonFactorable} explore sparsity and symmetry by storing and operating on the Hessian in an outer product
format, the so called dyadic form.
A natural strategy, when employing a forward Hessian mode, is to store the Hessian matrices
involved in data structures that accommodate their symmetry~\cite{Abate:1997}.
When dealing with very sparse matrices, one may obtain the sparsity pattern,
and then individually calculate selected nonzero
elements using methods such as univariate Taylor expansion~\cite{Abate:1997,Bischof:1993}.
Truly effective methods currently in use, with a substantial number of reports including numerical tests, take advantage of
sparsity and symmetry by combining graph coloring with Hessian-vector AD routines~\cite{GebreHessianAuto,AndreaSparsity}.

The paper is organized as follows. Section~\ref{sec:preliminaries} presents concepts and notation regarding function and
gradient evaluation in AD. The graph model for Hessian computation is developed in
Section~\ref{sec:HessianGraphModel} and the
algebraic formula for the Hessian is obtained in the next section. The new algorithm, \texttt{edge\_pushing}, or
\texttt{e\_p} for short, is described in Section~\ref{sec:ep}. The computational experiments are reported in
Section~\ref{sec:computationalexperiments} and we close with conclusions and comments on future work.

\section{Preliminaries: function and gradient computation}\label{sec:preliminaries}
In order to simplify the discussion, we consider functions $f\colon\R^n \to \R$ that are twice continuously differentiable. It is
more convenient and the results obtained can be generalized in a straightforward manner to smaller domains and functions that are
twice continuously differentiable by parts. There are of course multiple possibilities for expressing a function, but even if one
chooses a specific way to write down a function, or a specific way of programming a function $f$, one still may come up with
several distinct translations of $f$ into a finite sequential list of functions. We assume in the following that such a list has
already been produced, namely there exists a sequence $(\phi_{1-n},\ldots, \phi_0, \phi_1,\ldots, \phi_\ell)$, such that the first
$n$ functions are the coordinate variables, each \emph{intermediate function} $\phi_i$, for $i=1,\ldots, \ell$, is a function of
previous functions in the sequence, and, if we sweep this sequence in a forward fashion, starting with some fixed vector
$x=(\phi_{1-n},\ldots,\phi_0)$, the value obtained for $\phi_\ell$ coincides with the value of $f(x)$. Jackson and McCormick
\cite{McCormickJacksonFactorable} dealt with a very  similar concept, which they called a \emph{factorable function}, but in that
case the intermediate functions were either sums or products of precisely two previous functions, or generic functions of a single
previous function, that is, unary functions. Although the framework for calculating the Hessian developed here is valid for
intermediate functions with any number of input variables, when evaluating complexity bounds, we assume that the functions
$\phi_i$, for $i=1,\ldots, \ell$, are either unary or binary.

It is very convenient to model the sequential list $(\phi_{1-n}, \ldots, \phi_\ell)$ and the interdependence amongst its
components as an acyclic digraph $G=(N,A)$, called \emph{computational graph}. Loosely speaking, the computational graph
associated with the list has nodes $\{1-n,\ldots, \ell\}$ and edges $\{(j,i)\mid \phi_i \mbox{ depends on the}$ $\mbox{value of  }
\phi_j\}$. The interdependence relations are thus translated into predecessor relations between nodes, and are denoted by the
symbol $\prec$. Thus the arc $(j,i)$ embodies the precedence relation $j\prec i$. Notice that, by construction, $j\prec i$ implies
$j<i$. Furthermore, if we denote by $v_i$ the output value of $\phi_i$ for a given input, then we may shorten, for instance, the
expression $v_i=\phi_i(v_j,v_k)$ to $v_i=\phi_i(v_j)_{j\prec i}$.

\begin{figure}
\centering\includegraphics[scale=.82
]{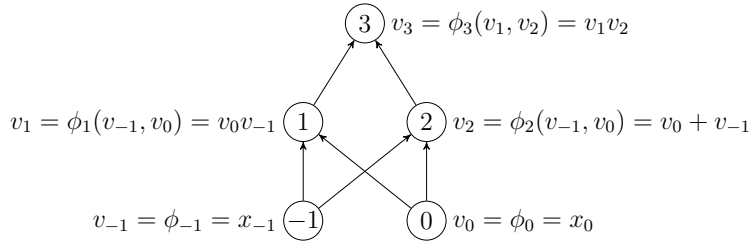}
\caption{Computational graph of the function $f(x)=(x_{-1} x_0)(x_{-1} + x_0)$.}\label{fig:computationalgraph}
\end{figure}

Due to the choice of the numbering scheme for the $\phi$'s, commonly adopted in the literature, we found it convenient to apply,
throughout this article, a shift of $-n$ to the indices of all matrices and vectors. We already have $x\in \R^n$, which, according
to this convention, has components $x_{1-n}$, $x_{2-n}$, \ldots, $x_0$. Similarly, the rows/columns of the Hessian $f''$ are
numbered $1-n$ through~$0$.  Other vectors and matrices will be gradually introduced, as the need arises for expressing and
deducing mathematical properties enjoyed by the data.  Figure~\ref{fig:computationalgraph} shows the computational graph of
function $f(x)=(x_{-1} x_0)(x_{-1} + x_0)$ that corresponds to the sequence $\revisedtext{(v_{-1},\ldots,v_3)}=
(x_{-1},x_0,x_{-1}x_0,x_{-1}+x_0,(x_{-1} x_0)(x_{-1} + x_0))$.

Griewank and Walther's \cite{Griewank:2008} representation of $f$ as a composition of state transformations
\begin{equation}\label{eq:StateTransform}
 f(x) = e_{\ell}^T (\Phi_{\ell} \circ \Phi_{\ell-1} \circ \cdots \circ \Phi_{1}) (P^{T} x), 
\end{equation}
\revisedtext{where $e_{\ell}$ is the $(\ell + n)$th canonical vector,} the $n\times (n+\ell)$ matrix $P$ is zero except for the
leftmost $n$-dimensional block which contains an identity matrix, and
\begin{eqnarray}
\Phi_i &\colon& \R^{n+\ell}\to\R^{n+\ell}\nonumber\\
&& \parbox{25pt}{\centerline{$y$}} \mapsto  (y_{1-n},\ldots,y_{i-1},\phi_i(y_j)_{j\prec i},y_{i+1},\ldots,
y_\ell)^T\label{eq:statetrasnfPhi}\end{eqnarray} 
leads to a synthetic formula for the gradient of $f$, using the chain rule recursively:
\begin{equation}
\label{eq:gradientstatetransform}
(\nabla f(x))^T = e_{\ell}^T \Phi_{\ell}'\Phi_{\ell-1}' \cdots \Phi_{1}'P^{T}.
\end{equation}
For simplicity's sake, the argument of each function is omitted in (\ref{eq:gradientstatetransform}), but it should
be noted that $\Phi_k'$ is evaluated at $(\Phi_{k-1}\circ\Phi_{k-2}\circ\cdots\circ\Phi_1)(P^T x)$, for $k=1,\ldots, \ell$.

The advantage of vector/matrix notation is that formulas expressed in terms of vector/matrix operations usually lend themselves
to straightforward algorithmic implementations. Nevertheless, when analyzing complexity issues and actual implementation, one has
to translate block operations with vectors or matrices into componentwise operations on individual variables.

In this case, for instance, one can immediately devise two ways of obtaining $\nabla f$ based
on~(\ref{eq:gradientstatetransform}): calculating the product  of the matrices in a right-to-left fashion, or left-to-right. The
latter approach constituted a breakthrough in gradient computation, since the time complexity of its implementation was basically
the same as that of the function evaluation, a major improvement over the former approach. In the left-to-right way, the indices
are traveled in decreasing order, so this method of calculating the gradient is called the \emph{reverse gradient computation}.
Notice that, in graph terms, this corresponds to a backward sweep of the computational graph.

Of course, one needs the values of $\revisedtext{v_j}$, for $j\prec i$, in order to calculate $\revisedtext{v}_i$. Thus in
order to do perform a
backward sweep, it must be preceded by a forward sweep, in which all the values $\revisedtext{v_i}$, \revisedtext{for $i = 1,
\ldots, \ell$,} have been calculated. We shall call
the data structure that contains all information concerning the function evaluation produced during the forward
sweep a \emph{tape} $\mathcal{T}$. Thus the tape contains the relevant  recordings of a forward sweep along with the computational
graph of $f$.

Algorithms~1 and~2 contain the implementation of the reverse gradient computation in block and
componentwise forms, respectively.

\begin{center}
\includegraphics[scale=.83
]{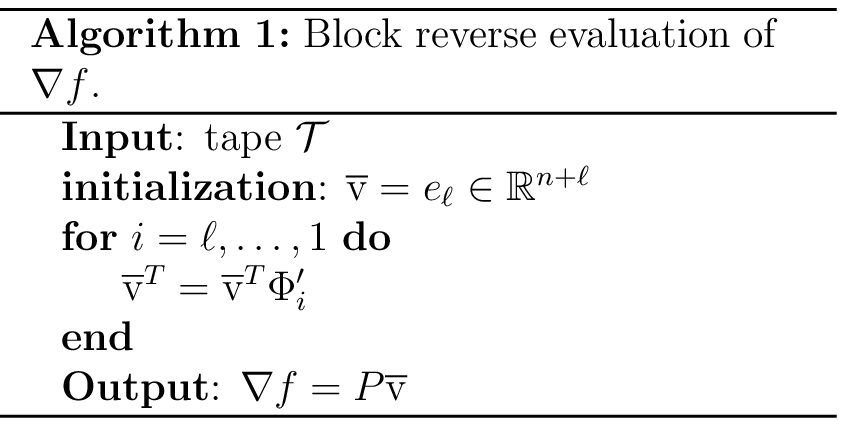}

\includegraphics[scale=.83
]{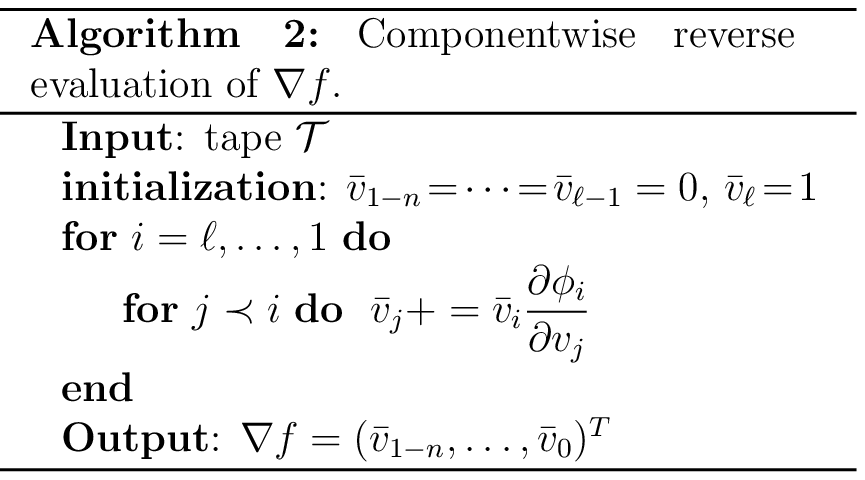}
\end{center}

\medskip
In Algorithm~1 the necessary partial products are stored in $\overline{\mathrm{v}}$, and, right
before node $i$ is swept, the vector $\overline{\mathrm{v}}$ satisfies
\begin{equation}\label{eq:adjointvectornoindex}
\overline{\mathrm{v}}^T = e^T_\ell \prod_{j=1}^{\ell-i} \Phi'_{\ell-j+1}.
\end{equation}

The streamlined componentwise form of Algorithm~1 follows from the very simple block structure of
the Jacobian $\Phi_i'$:
\begin{equation}\label{eq:PhiJacobian}
\Phi_i' = \left[ \begin{array}{@{\hspace{7pt}}c@{\hspace{8pt}}|@{\hspace{5pt}}c@{\hspace{5pt}}|@{\hspace{11pt}}c@{\hspace{7pt}}} 
\rule{0pt}{23pt}\rule[-13pt]{0pt}{10pt}I & 0 & 0\makebox(0,10)[lc]{\footnotesize$\begin{array}{@{\hspace{.5cm}}c}1-n\\[-5pt]
\vdots \\[-3pt] i-1\end{array}$}\\ 
\hline
(c^i)^T\rule{0pt}{12pt} & 0 & 0\makebox(0,10)[lc]{\footnotesize\hspace{.5cm}row $i$,} \\ \hline
\rule{0pt}{23pt}\rule[-10pt]{0pt}{10pt}0 & 0 & I\makebox(0,10)[lc]{\footnotesize$\begin{array}{@{\hspace{.5cm}}c}i+1\\[-5pt]
\vdots \\[-3pt] \ell\end{array}$}\end{array}\right]
\end{equation}
where
\begin{equation}\label{eq:definicaocT}
(c^i)_{ j} =\frac{\partial\phi_i}{\partial v_j},\quad\mbox{for }j=1-n,\ldots,i-1.
\end{equation}
Thus $(c^i)^T$ is basically the transposed gradient of $\phi_i$ padded with the convenient number of zeros at the appropriate
places.
In particular, it has at most as many nonzeros as the number of predecessors of node $i$, and the post-multiplication
$\overline{\mathrm{v}}^T\Phi'_i$ affects the components of $\overline{\mathrm{v}}$ associated with the predecessors of node $i$
and zeroes component $i$. In other words, denoting component $i$ of $\overline{\mathrm{v}}$ by $\bar{v}_i$, the block assignment
in Algorithm~1 is equivalent to
\[\bar{v}_j \leftarrow\left\{\begin{array}{ll}\bar{v}_j +\bar{v}_i\dfrac{\partial\phi_i}{\partial v_j},\quad
&\mbox{if }j\prec i,\\
0, &\mbox{if }j=i,\\[5pt]
\bar{v}_j,& \mbox{otherwise.} \end{array}\right.\]
Now this assignment is done as the node $i$ is swept, and, therefore, in subsequent iterations component $i$ of
$\overline{\mathrm{v}}$ will not be accessed, since the loop visits nodes in decreasing index order. Hence setting component $i$
to zero has no effect on the following iterations. Eliminating this superfluous reduction, we arrive at Algorithm
~2, the componentwise (slightly altered) version of
Algorithm~1.

In order to give a graph interpretation of Algorithm~2, let $\revisedtext{c^{i}_{j}} = \partial \phi_i/\partial
v_j$ be the weight of arc $(j,i)$, and define the weight of a directed path from node $j$ to node $k$ as the product of the
weights of the arcs in the path. Then, one can easily check by induction that, right before node $i$ is swept, the \emph{adjoint}
$\bar{v}_i$ contains the sum of the weights of all the paths from node $i$ to node $\ell$:
\begin{equation}\label{eq:barvipath} \bar{v}_i = \sum_{p\mid\mbox{\scriptsize path from $i$ to $\ell$}} \mbox{weight of path }
p,\end{equation}
and $\partial f/\partial v_i = \bar{v}_i$. This formula has been known for quite some time~\cite{Bauer_1974}.

As node $i$ is swept, the value of $\bar{v}_i$ is properly distributed amongst its predecessors, in the sense that, accumulated
in $\bar{v}_j$, for each predecessor $j$, is the contribution of all paths from $j$ to $\ell$ that contain node $i$, weighted by
$\revisedtext{c^{i}_{j}}$. Hence, at the end of Algorithm~2, the adjoint $\bar{v}_{i-n}$, for $i=1,\ldots, n$,
contains the sum of the weights of all paths from $i-n$ to $\ell$. This is in perfect accordance with the explanation for the
computation of partial derivatives given in some Calculus textbooks, see, for instance, \cite[p. 940]{Stewart:2007}.

Of course different ways of calculating the product of the Jacobians of the state transformations in
(\ref{eq:gradientstatetransform}) may give rise to different algorithms. In the following, using the same ingredients, we obtain a
closed formula for the Hessian, that can be used to justify known algorithms as well as suggest a new algorithm for Hessian
computation. Before that, however, we develop a graph understanding of the Hessian computation.

\section{Hessian graph model}\label{sec:HessianGraphModel}

Creating a graph model for the Hessian is also very useful, as it provides insight and intuition regarding the workings of Hessian
algorithms. Not only can the graph model suggest algorithms, it can also be very enlightening to interpret the operations
performed by an algorithm as operations on variables associated with the nodes and arcs of a computational graph.

Since second order derivatives are simply first order derivatives of the gradient, a natural approach to their calculation would
be to build a computational graph for the gradient and apply a variant of Algorithm~2 on this new graph to obtain
the second order partial derivatives. We do this to better understand the problem, but later on we will see that it is
not really necessary to build the full-fledged gradient computational graph, but one can instead work with the
original graph plus some new edges.

Of course the gradient may be represented by distinct computational graphs, or equivalently, sequential lists of functions, but
the natural one to consider is the one associated with the computation performed by
Algorithm~2. Assuming this choice, the gradient $\nabla f =
(\bar{v}_{1-n},\ldots,\bar{v}_0)^T$ is a composite function of $(\bar{v}_1,\ldots,\bar{v}_\ell)$, as well as
$(v_{1-n},\ldots,v_\ell)$, which implies that the gradient (computational) graph $G^g=(N^g,A^g)$ must contain $G$. The graph $G^g$
is basically built upon $G$ by adding nodes associated with $\bar{v}_i$, for $i=1-n,\ldots, \ell$, and edges 
representing the functional dependencies between these nodes.

Thus the node set $N^g$ contains $2(n+\ell)$ nodes $\{1-n,\ldots,\ell, \overline{1-n},\ldots, \overline{\ell}\}$, the first half
associated with the original variables and the second half with the adjoint variables. The arc set is $A^g=A_1\cup A_2\cup A_3$,
where $A_1$ contains arcs with both endpoints in ``original'' nodes; $A_2$, arcs with both endpoints in ``adjoint'' nodes and
$A_3$, arcs with endpoints of mixed nature. Since running Algorithm~2 does not introduce new
dependencies amongst the original $v$'s, we have that $A_1=A$. 

The new dependent variables created by running Algorithm~2 satisfy
\begin{equation}\label{eq:barvi}
\bar{v}_i = \sum_{k \mid i\prec k} \bar{v}_k\frac{\partial v_k}{\partial v_i}\end{equation}
at the end of the algorithm.
Expression (\ref{eq:barvi}) indicates that $\bar{v}_i$ depends on $\bar{v}_k$ for every $k$ that is a successor of
$i$. Thus every arc $(i,k)\in A_1$ gives rise to arc $(\bar{k},\overline{\textit{\i}})\in A_2$. Therefore arcs in $A_2$ are copies
of the arcs in $A$ with the orientation reversed. The graph $G^g$
thus contains $G$ and a kind of a  mirror copy of $G$. Furthermore, if a partial derivative $\partial v_k/\partial v_i$ in the
sum in (\ref{eq:barvi}) is not constant, but is a function of some $v_j$, this gives rise to the precedence relation $j\prec
\overline{\textit{\i}}$. Notice that this may only happen if $j\prec k$, that is, the arcs $(j,\overline{\textit{\i}})\in A_3$
imply that $j$ and $i$ share a common successor~$k$. This implies, in particular, that there are no edges incident to
$\overline{\ell}$.

Apparently, the computational graph of the gradient was first described in \cite{Dix91}, but can be found in a number
of places, e.g., \cite[p. 237]{Griewank:2008}.

Figure~\ref{fig:gradientcomputationalgraph} shows the computational graph of the gradient of the function
$f$ given in Figure~\ref {fig:computationalgraph}. Notice that on the left we have the computational graph of $f$ and, on the
right, a mirror copy thereof. Arcs in $A_3$ are the ones drawn dashed in the picture.

\begin{figure}
\begin{center}
\includegraphics[scale=.82
]{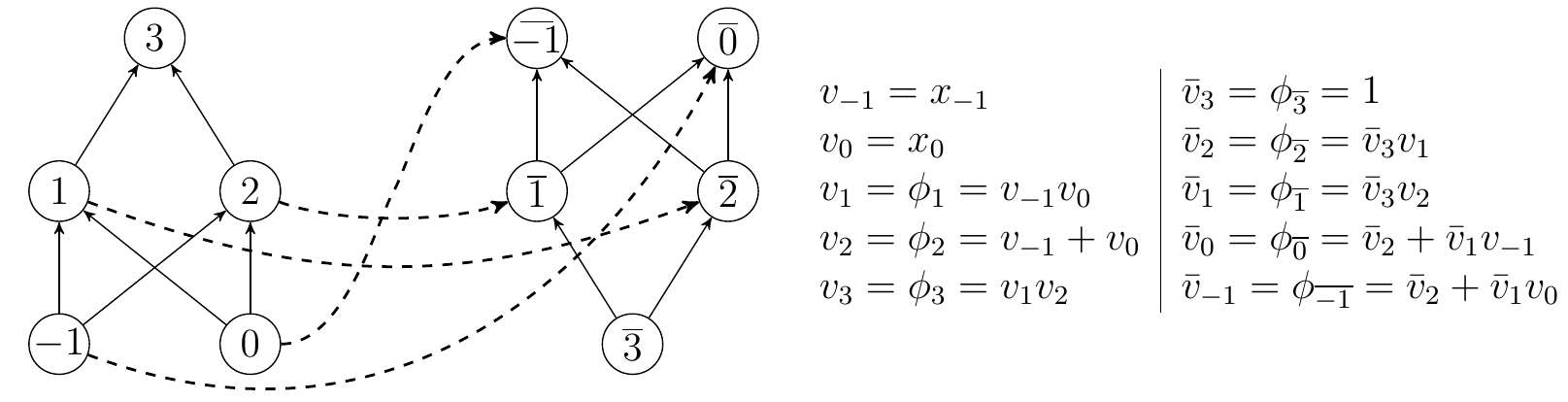}
\caption{Gradient computational graph $G^g$ of the function $f(x)=(x_{-1} x_0)(x_{-1} + x_0)$, represented
by the computational graph in Figure~\protect\ref{fig:computationalgraph}.}\label{fig:gradientcomputationalgraph}
 \end{center}
\end{figure}

Mimicking (\ref{eq:barvipath}), we conclude that
\begin{equation}\label{eq:derij}
\frac{\partial^2 f}{\partial x_i\partial x_j}  = \sum_{p\mid \mbox{\scriptsize path from $i$ to
$\overline{\textit{\j}}$}}\mbox{weight of path $p$}. \end{equation}

The weights of arcs $(i,j)\in A_1$ are already know. Equation (\ref{eq:barvi}) implies that the weight of arc
$(\overline{\textit{\i}}, \overline{\textit{\j}})\in A_2$ is
\begin{equation}\label{eq:symmetricweights}
c^{\kern1pt \overline{\textit{\j}}}_{\rule{0pt}{8pt}\overline{\textit{\i}}} =\frac{\partial v_i}{\partial v_j} =
c^{i}_{j},\end{equation}
that is, arc $(i,j)$ has the same weight as its mirror image.
 
The weight of arc $(j,\textit{\i})$ is also obtained from (\ref{eq:barvi})
\begin{eqnarray}
c^{\overline{\textit{\i}}}_{j} &=& \sum_{k\mid i\prec k} \bar{v}_k\frac{\partial^2 v_k}{\partial v_j\partial
v_i}\nonumber\\
& = &  \sum_{k\mid i\prec k~\mbox{\scriptsize and}~j\prec k} \bar{v}_k\frac{\partial^2 v_k}{\partial v_j\partial
v_i},\label{eq:streamliningw}
\end{eqnarray}
since the partial derivative $\partial^2 v_k/\partial v_j\partial v_i$ is identically zero if $k$ is not a successor of~$j$. In
particular, (\ref{eq:streamliningw}) and the fact that $f$ is twice continuously differentiable imply that
\begin{equation}\label{eq:pesosnaolineares}
c^{\overline{\textit{\i}}}_{j} =c^{\overline{\textit{\j}}}_{\rule{0pt}{8pt}i} , \mbox{ for }
j\neq i .
\end{equation}
Notice that arcs in $A_3$ are the only ones with second-order derivatives as weights. In a sense, they carry the nonlinearity of
$f$, which suggests the denomination \emph{nonlinear arcs}. 

Regarding the paths in $G^g$ from $i$ to $\overline{\textit{\j}}$, for fixed $i,j\in \{1-n,\ldots, 0\}$, each of them contains a
unique nonlinear arc, since none of the original nodes is a successor of an adjoint node. Therefore, the sum in (\ref{eq:derij})
may be partitioned according to the nonlinear arc utilized by the paths as follows:
\begin{equation}\label{eq:2derpartioned}
\frac{\partial^2 f}{\partial x_i\partial x_j}= \kern-9pt\sum_{(r,\overline{s})\in A_3}\!\! \left[\!
\left(\sum_{p\mid \mbox{\scriptsize path from $i$ to $r$}}\kern-20pt\mbox{weight of path $p$}\right) 
c^{\overline{s}}_{r}
\left(\sum_{q\mid \mbox{\scriptsize path from $\overline{s}$ to $\overline{\textit{\j}}$}}\kern-19pt\mbox{weight of path $q$}\right)\!\right]\!, \end{equation}
which reduces to 
\begin{equation}\label{eq:2derpartionedfolded}
\frac{\partial^2 f}{\partial x_i\partial x_j} = \kern-9pt\sum_{(r,\overline{s})\in A_3}\!\! \left[\!
\left(\sum_{p\mid \mbox{\scriptsize path from $i$ to $r$}}\kern-20pt\mbox{weight of path $p$}\right) 
c^{\overline{s}}_{r}
\left(\sum_{q\mid \mbox{\scriptsize path from $j$ to $s$}}\kern-19pt\mbox{weight of path $q$}\right)\!\right]\!, \end{equation}
where the second set of summations is replaced using the symmetry in (\ref{eq:symmetricweights}).

On close examination, there is a lot of redundant information in $G^g$. One really doesn't need the mirror copy of $G$, since the
information attached to the adjoint nodes can be recorded associated to the original nodes and the arc weights of the mirror arcs
are the same. Now if we fold back the mirror copy over the original, identifying nodes $k$ and $\overline{k}$, we obtain a graph
with same node set as $G$ but with an enlarged set of arcs. Arcs in $A$ will be replaced by pairs of arcs in opposite directions
and nonlinear arcs will become either loops (in case one had an arc $(i,\overline{\textit{\i}})$ in $A_3$) or pairs of arcs with
opposite orientations between the same pair of nodes. Also, equations (\ref{eq:symmetricweights}) and (\ref{eq:pesosnaolineares})
imply that all arcs in parallel have the same weight, see Figure~\ref{fig:foldgradientcomputationalgraph}. This is still too much
redundancy. We may leave arcs in $A$ as they are, replace the pairs of directed nonlinear arcs in opposite directions with a
single undirected arc between the same pair of nodes and the directed loops by undirected ones, as exemplified in
Figure~\ref{fig:foldgradientcomputationalgraph}, as long as we keep in mind the special characteristics of the paths we're
interested in.

 \begin{figure}
\begin{center}
\includegraphics[scale=.82
]{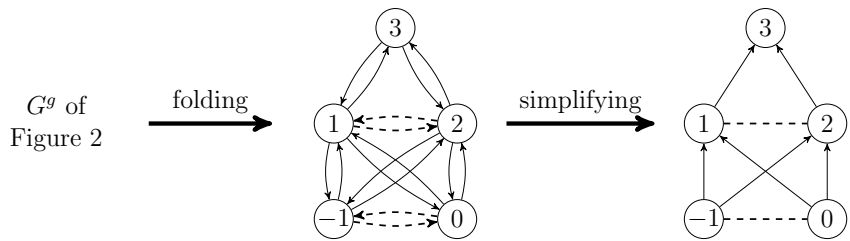}
\caption{Folding of the gradient computational graph of Figure~\protect\ref{fig:gradientcomputationalgraph}
and further elimination of redundancies.\label{fig:foldgradientcomputationalgraph}}
 \end{center}
\end{figure}

The paths needed for the computation of the Hessian, in the folded and simplified graph,  are divided into three parts. In the
first part we have a directed path from some zero in-degree node, say $i$, to some other node, say $r$. Next comes an undirected
nonlinear arc $(r,s)$. The last part is a path from $s$ to another zero in-degree node, say $j$, in which all arcs are traveled in
the wrong direction. Of course, both the first and third parts of the path may be empty, only the middle part (the nonlinear arc)
is mandatory.

This folded and simplified graph can be interpreted as a reduced gradient graph, with the symmetric redundancies removed. The
graph together with the tri-parted path interpretation for partial derivatives constitutes our graph model for the Hessian. In
Section~\ref{sec:ep} we will present an algorithm that takes full advantage of \revisedtext{these} symmetries and has a natural
interpretation as an algorithm that gradually introduces nonlinear arcs and accumulates the weights of these special paths in the
computational graph.
In contrast, the authors in~\cite{Bhowmick:2008} build the entire gradient graph to then
use an axial symmetry detection algorithm on this computational
graph in order to eliminate redundancies. Once this is done, the Hessian is calculated via Jacobian methods.

\section{Hessian formula}\label{sec:HessianFormula}

The closed formula to be developed concerns the Hessian of a function $g$ that is defined as a linear combination of the functions
 $\Psi_1$, \ldots, $\Psi_p$, or, in matrix form,
\begin{equation}\label{def:g} g(x) = y^T \Psi (x),\end{equation}
where $y\in \R^p$ and $\Psi\in C^2(\R^n,\R^p)$. The linearity of the differential operator implies that the Hessian of $g$ is
simply the linear combination of the Hessians of $\Psi_1$, \ldots, $\Psi_p$:
\begin{equation}\label{def:hessg}g''(x) = \sum_{i=1}^p y_i \Psi''_i(x).\end{equation}
This motivates the introduction of the following definition of the vector-tensor product $y^T \Psi''(x)$, in order to establish an
analogy between the linear combinations in (\ref{def:g}) and (\ref{def:hessg}):
\begin{equation}\label{def:vectortensorproduct}
g''(x) = (y^T\Psi(x))'' = y^T \Psi'' = \sum_{i=1}^p y_i \Psi''_i(x).
\end{equation}

Next we need to establish how to express $g''$ when $\Psi$ is a composition of vector functions of several variables, the subject
of the next Proposition.

\begin{proposition}\label{prop:hesscomp} Let $y\in \R^p$, $\Omega \in C^2(\R^n,\R^m)$, $\Theta \in C^2(\R^m,\R^p)$ and $\Psi (x)=
\Theta \circ
\Omega(x)$. Then
\begin{equation}\label{eq:hesscomp}
y^T \Psi'' = (\Omega')^T (y^T \Theta'')\Omega'+ (y^T\Theta') \Omega''.
\end{equation}
\end{proposition}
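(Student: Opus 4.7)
The plan is to reduce the vector-output identity to the scalar case via the definition (\ref{def:vectortensorproduct}) and then invoke the classical chain rule for the Hessian of a scalar composition. First I would expand
\[y^T \Psi'' = \sum_{i=1}^p y_i\, \Psi_i'' = \sum_{i=1}^p y_i\, (\Theta_i \circ \Omega)'',\]
so the question becomes how to express $(\Theta_i \circ \Omega)''$ for a single scalar-valued outer component $\Theta_i$.

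For the scalar composition step, I would apply the standard Fa\`a di Bruno identity of order two: for $h(x) = \theta(\Omega(x))$ with $\theta\colon \R^m \to \R$,
\[h''(x) = (\Omega'(x))^T\, \theta''(\Omega(x))\, \Omega'(x) + \sum_{k=1}^m \frac{\partial \theta}{\partial w_k}(\Omega(x))\, \Omega_k''(x).\]
This is obtained by differentiating the gradient formula $h'(x)^T = (\theta'(\Omega(x)))\Omega'(x)$ componentwise and using the product rule on each of the two factors; I would cite or derive this briefly. Setting $\theta = \Theta_i$ yields
\[\Psi_i'' = (\Omega')^T\, \Theta_i''\, \Omega' + \sum_{k=1}^m (\Theta_i')_k\, \Omega_k''.\]

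Then I would multiply by $y_i$, sum over $i$, and pull the sums through the linear combinations:
\[y^T \Psi'' = (\Omega')^T \Bigl(\sum_{i=1}^p y_i\, \Theta_i''\Bigr)\Omega' + \sum_{k=1}^m \Bigl(\sum_{i=1}^p y_i\, (\Theta_i')_k\Bigr) \Omega_k''.\]
The first inner sum is $y^T \Theta''$ by the vector-tensor product definition (\ref{def:vectortensorproduct}); the second inner sum is the $k$-th component of the row vector $y^T \Theta'$, so applying (\ref{def:vectortensorproduct}) once more to the $m$-vector $y^T\Theta'$ and the tensor $\Omega''$ collapses the outer sum to $(y^T\Theta')\,\Omega''$, yielding (\ref{eq:hesscomp}).

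The main obstacle is not analytic but notational: I must make sure that the symbol $(y^T\Theta')\,\Omega''$ on the right-hand side is unambiguously the vector-tensor product defined in (\ref{def:vectortensorproduct}), applied with the $m$-dimensional row vector $y^T\Theta'(\Omega(x))$ and the $m$-indexed family of Hessians $\Omega_k''(x)$. I would state this matching of dimensions explicitly before performing the collapse, so that the final line of display is simply a rewriting of the summations via (\ref{def:vectortensorproduct}) rather than a new claim.
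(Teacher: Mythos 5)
Your proposal is correct and follows essentially the same route as the paper: both boil down to differentiating the first-order chain rule $\partial(\Theta_i\circ\Omega)/\partial x_k=\sum_r(\partial\Theta_i/\partial\Omega_r)(\partial\Omega_r/\partial x_k)$ once more via the product rule and then regrouping the resulting double sum into the two terms of (\ref{eq:hesscomp}). The only difference is presentational --- you factor the linear combination over $i$ out front and reinsert it at the end, whereas the paper carries the sum over $i$ through a single entrywise computation of $(y^T\Psi'')_{jk}$.
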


\begin{proof} By definition, applying differentiation rules, and using the symmetry of the Hessian, we may calculate entry $(j,k)$ of the
Hessian as follows:
\begin{eqnarray*}
 (y^T \Psi''(x))_{jk} &=& \sum_i y_i\; \frac{\partial^2 \Psi_i(x)}{\partial x_j \partial x_k}\\
 &=& \sum_i y_i\; \frac{\partial }{\partial x_j}\left(\frac{\partial \Theta_i(\Omega(x))}{\partial x_k}\right)\\
 &=&  \sum_i y_i\; \frac{\partial}{\partial x_j}\left(\sum_{r=1}^m\frac{\partial \Theta_i(\Omega(x))}{\partial
\Omega_r}\frac{\partial \Omega_r(x)}{\partial x_k}\right)\\
 &=& \displaystyle\sum_i \sum_r y_i \left[\frac{\partial}{\partial x_j}\left(\frac{\partial \Theta_i(\Omega(x))}{\partial
\Omega_r}\right)\right]\frac{\partial \Omega_r(x)}{\partial x_k} +\sum_i \sum_r y_i\; \frac{\partial \Theta_i(\Omega(x))}{\partial
\Omega_r} \frac{\partial^2 \Omega_r(x)}{\partial x_j\partial x_k}\\
 &=& \sum_r  \sum_s \sum_i y_i\; \frac{\partial^2\Theta_i(\Omega(x))}{\partial \Omega_s\partial \Omega_r}\frac{\partial
\Omega_s(x)}{\partial x_j} \frac{\partial \Omega_r(x)}{\partial x_k}+ \sum_r (y^T\Theta'(\Omega(x))_r (\Omega''_r(x))_{jk}\\
 &=& \sum_s \sum_r (y^T \Theta''(\Omega(x)))_{rs} \left(\Omega'(x)\right)_{sj} \left(\Omega'(x)\right)_{rk}
 + \sum_r (y^T\Theta'(\Omega(x))_r (\Omega''_r(x))_{jk}\\
 &=&\sum_s  \left(\Omega'(x)\right)_{sj} \sum_r (y^T \Theta''(\Omega(x)))_{sr}  \left(\Omega'(x)\right)_{rk}+ \sum_r
(y^T\Theta'(\Omega(x))_r (\Omega''_r(x))_{jk},\\
 &=&\left((\Omega'(x))^T(y^T\Theta''(\Omega(x)))\Omega'(x)\right)_{jk} + \left((y^T\Theta'(\Omega(x))) \Omega''(x)\right)_{jk},
 \end{eqnarray*}
which is the entry $(j,k)$ of the right-hand-side of (\ref{eq:hesscomp}).\end{proof}

Although we want to express the Hessian of a composition of state transformations, it is actually easier to obtain the closed form
for the composition of generic vector multivariable functions, our next result.

\begin{proposition}\label{prop:hessgencomp}
Let $\Psi_i(x)\in C^2(\R^{m_{i-1}},\R^{\revisedtext{m_i}})$, for $i=1,\ldots, k$, $y\in \R^{m_k}$ and
\[ g(x) = y^T \left( \Psi_k\circ\cdots \circ \Psi_1\right)(x).\]
Then
\begin{equation}\label{eq:hessgeneral}
g'' = \sum_{i=1}^k \left(\prod_{j=1}^{i-1}(\Psi'_j)^T \right)((\overline{\mathrm{w}}^i)^T\Psi''_i) 
\left(\prod_{j=1}^{i-1}\Psi'_{i-j} \right),
\end{equation}
where
\begin{equation}\label{eq:w}
(\overline{\mathrm{w}}^i)^T = y^T\prod_{j=1}^{k-i}\Psi'_{k-j+1},\quad \mbox{for }i=1,\ldots,k.
\end{equation}
\end{proposition}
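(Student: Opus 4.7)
The plan is to prove Proposition \ref{prop:hessgencomp} by induction on $k$, using Proposition \ref{prop:hesscomp} as the key tool in the inductive step.

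For the base case $k=1$, we have $g(x) = y^T \Psi_1(x)$, so by definition $g'' = y^T \Psi_1''$. The right-hand side of \eqref{eq:hessgeneral} has only the $i=1$ term, whose surrounding products $\prod_{j=1}^{0}$ are empty and thus equal the identity, and whose weight vector is $(\overline{\mathrm{w}}^1)^T = y^T \prod_{j=1}^{0} \Psi'_{k-j+1} = y^T$. So both sides agree.

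For the inductive step, assume the formula holds for compositions of $k-1$ functions. Given $g(x) = y^T(\Psi_k \circ \Omega)(x)$ where $\Omega = \Psi_{k-1}\circ\cdots\circ\Psi_1$, apply Proposition \ref{prop:hesscomp} with $\Theta = \Psi_k$ to obtain
\[
g'' = (\Omega')^T\bigl(y^T\Psi_k''\bigr)\Omega' + \bigl(y^T\Psi_k'\bigr)\Omega''.
\]
By the chain rule, $\Omega' = \prod_{j=1}^{k-1}\Psi'_{k-j}$ and $(\Omega')^T = \prod_{j=1}^{k-1}(\Psi'_j)^T$, so the first summand is exactly the $i=k$ term of \eqref{eq:hessgeneral}, since $(\overline{\mathrm{w}}^k)^T = y^T$.

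For the second summand, set $\tilde{y}^T = y^T\Psi_k'$ so that $(y^T\Psi_k')\Omega'' = \tilde{y}^T\Omega''$, to which the inductive hypothesis applies with $k-1$ functions and the vector $\tilde y$. This produces a sum over $i=1,\ldots,k-1$ with inner weight vectors
\[
(\tilde{\overline{\mathrm{w}}}^i)^T = \tilde{y}^T \prod_{j=1}^{k-1-i}\Psi'_{k-j} = y^T\Psi_k'\prod_{j=1}^{k-1-i}\Psi'_{k-j}.
\]
The main bookkeeping step, and the only place where one must be careful, is to verify via a shift of summation index ($j \mapsto j-1$) that this expression equals $(\overline{\mathrm{w}}^i)^T = y^T\prod_{j=1}^{k-i}\Psi'_{k-j+1}$ as defined in \eqref{eq:w}; prepending the factor $\Psi_k'$ corresponds exactly to extending the product from $k-1-i$ to $k-i$ factors under the re-indexing. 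With this identification, the inductive hypothesis contributes precisely the terms $i=1,\ldots,k-1$ of \eqref{eq:hessgeneral}, which combined with the $i=k$ term from the first summand gives the full formula.

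The only real obstacle is notational: managing the reversed order of matrix products in \eqref{eq:hessgeneral} and \eqref{eq:w} and tracking how the extra factor $\Psi_k'$ absorbed into the weight vector realigns indices under the inductive hypothesis. Once the index shift is handled carefully, the argument reduces to a direct assembly of the two summands produced by Proposition \ref{prop:hesscomp}.
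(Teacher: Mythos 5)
Your proof is correct, and it takes the induction in the opposite direction from the paper's. You peel off the \emph{outermost} function: writing $g=y^T(\Psi_k\circ\Omega)$ with $\Omega=\Psi_{k-1}\circ\cdots\circ\Psi_1$, you apply Proposition~\ref{prop:hesscomp} \emph{first}, which immediately yields the $i=k$ term of \eqref{eq:hessgeneral} plus the remainder $(y^T\Psi_k')\Omega''$, and only then invoke the induction hypothesis on the inner composition with the modified weight vector $\tilde y^T=y^T\Psi_k'$; the whole burden is the re-indexing $y^T\Psi_k'\prod_{j=1}^{k-1-i}\Psi'_{k-j}=y^T\prod_{j=1}^{k-i}\Psi'_{k-j+1}=(\overline{\mathrm{w}}^i)^T$, which you state correctly. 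The paper instead merges the two \emph{innermost} functions, $\Psi=\Psi_2\circ\Psi_1$, applies the induction hypothesis to the $(k-1)$-fold composition $\Psi_k\circ\cdots\circ\Psi_3\circ\Psi$ with the original constant $y$, and only afterwards uses Proposition~\ref{prop:hesscomp} to expand the single term $(\overline{\mathrm{w}}^2)^T\Psi''$, the remaining work being to distribute the conjugation by $\Psi'=\Psi_2'\Psi_1'$ over the bracketed sum. The trade-offs are minor: your route leaves the terms $i\le k-1$ and their surrounding Jacobian products untouched but requires applying the induction hypothesis with an $x$-dependent vector $\tilde y$ --- harmless, since both sides of \eqref{eq:hessgeneral} are pointwise linear in $y$, though a sentence acknowledging this would be welcome (the paper incurs the symmetric issue when it feeds the $x$-dependent $\overline{\mathrm{w}}^2$ into Proposition~\ref{prop:hesscomp}); the paper's route keeps the vector constant in the inductive call but must push $(\Psi')^T$ and $\Psi'$ through the sum and re-extend the index ranges from $j=3$ down to $j=1$. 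Both arguments are complete and rest on the same two ingredients.
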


\begin{proof} The proof is by induction on $k$. When $k=1$, the result is trivially true, since in this case
(\ref{eq:hessgeneral})--(\ref{eq:w}) reduce to $(\overline{\mathrm{w}}^1)^T \Psi''_1= y^T \Psi''_1$, which denotes, according to
(\ref{def:hessg}), the Hessian of $g$.

Assume the proposition is true when $g$ is the composition of $k-1$ functions. Now simply rewrite the composition of $k$ functions
as follows
\begin{equation}\label{eq:k-1} g = y^T \Psi_k \circ \cdots \circ \Psi_3 \circ \Psi,\end{equation}
where $\Psi = \Psi_2 \circ \Psi_1$. Then, applying the induction hypothesis to (\ref{eq:k-1}), we obtain
\begin{equation}\label{eq:hessk-1}
g'' = (\Psi')^T \left[\sum_{i=3}^k  \left(\prod_{j=3}^{i-1}(\Psi'_j)^T \right)((\overline{\mathrm{w}}^i)^T\Psi''_i) 
\left(\prod_{j=3}^{i-1}\Psi'_{i-j} \right)\right] \Psi' + (\overline{\mathrm{w}}^2)^T \Psi'',
\end{equation}
where $(\overline{\mathrm{w}}^2)^T \Psi''$ is simply the first summand of (\ref{eq:hessgeneral}), associated with
the rightmost function in (\ref{eq:k-1}).

The last term in (\ref{eq:hessk-1}) is calculated separately, using \revisedtext{Proposition~\ref{prop:hesscomp}} and
(\ref{eq:w}):
\begin{eqnarray}
(\overline{\mathrm{w}}^2)^T \Psi'' &=& (\Psi_1')^T ((\overline{\mathrm{w}}^2)^T \Psi_2'')\Psi_1'+
((\overline{\mathrm{w}}^2)^T\Psi_2') \Psi_1'' \nonumber\\
&=&(\Psi_1')^T ((\overline{\mathrm{w}}^2)^T \Psi_2'')\Psi_1'+ (\overline{\mathrm{w}}^1)^T \Psi_1''.\label{eq:lastterm}
\end{eqnarray}
Using the fact that $\Psi'= \Psi_2'\Psi_1'$, and expression (\ref{eq:lastterm}) obtained for the last term, (\ref{eq:hessk-1})
becomes
\begin{eqnarray*}
g'' &= &  (\Psi'_1)^T(\Psi'_2)^T \left[\sum_{i=3}^k  \left(\prod_{j=3}^{i-1}(\Psi'_j)^T
\right)((\overline{\mathrm{w}}^i)^T\Psi''_i) 
\left(\prod_{j=3}^{i-1}\Psi'_{i-j} \right)\right] \Psi_2'\Psi_1'\\
&&+(\Psi_1')^T ((\overline{\mathrm{w}}^2)^T \Psi_2'')\Psi_1'+ (\overline{\mathrm{w}}^1)^T \Psi_1'' \\
&=& \sum_{i=1}^k  \left(\prod_{j=1}^{i-1}(\Psi'_j)^T \right)((\overline{\mathrm{w}}^i)^T\Psi''_i) 
\left(\prod_{j=1}^{i-1}\Psi'_{i-j} \right),
\end{eqnarray*}
which completes the proof.\end{proof}

The Hessian of the composition of state transformations follows easily from Proposition~\ref{prop:hessgencomp}.

\begin{corollary}\label{cor:hessf}
Let $f$ be the composition of state transformations given in (\ref{eq:StateTransform}). Then its Hessian is
\begin{equation}\label{eq:hessf}
f'' =  P \sum_{i=1}^\ell \left(\prod_{j=1}^{i-1}(\Phi'_j)^T \right)((\overline{\mathrm{v}}^i)^T\Phi''_i) 
\left(\prod_{j=1}^{i-1}\Phi'_{i-j} \right) P^T,
\end{equation}
where
\begin{equation}\label{eq:v}
(\overline{\mathrm{v}}^i)^T = e^T_\ell\prod_{j=1}^{\ell-i}\Phi'_{\ell-j+1},\quad \mbox{for }i=1,\ldots,\ell.
\end{equation}
\end{corollary}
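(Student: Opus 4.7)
The plan is to view $f$ as the composition of the linear map $x \mapsto P^{T} x$ followed by the scalar function $h(z) = e_{\ell}^{T}(\Phi_{\ell}\circ\cdots\circ\Phi_{1})(z)$, and then apply Proposition~\ref{prop:hessgencomp} to $h$ while handling the outer $P^{T}$ separately by the chain rule. Both propositions have already been established, so this is essentially an exercise in matching notation.

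First I would set $m_{0} = m_{1} = \cdots = m_{\ell} = n + \ell$, $y = e_{\ell}$, and $\Psi_{i} = \Phi_{i}$ for $i = 1,\ldots,\ell$ in Proposition~\ref{prop:hessgencomp} applied to $h(z) = e_{\ell}^{T}(\Phi_{\ell}\circ\cdots\circ\Phi_{1})(z)$. The weight vectors $\overline{\mathrm{w}}^{i}$ defined in (\ref{eq:w}) then coincide verbatim with the vectors $\overline{\mathrm{v}}^{i}$ defined in (\ref{eq:v}) in the statement of the corollary, so the proposition yields
\begin{equation*}
h''(z) = \sum_{i=1}^{\ell}\left(\prod_{j=1}^{i-1}(\Phi'_{j})^{T}\right)\!\bigl((\overline{\mathrm{v}}^{i})^{T}\Phi''_{i}\bigr)\!\left(\prod_{j=1}^{i-1}\Phi'_{i-j}\right),
\end{equation*}
with every Jacobian evaluated along the forward trajectory starting from $z$.

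Next I would handle the outer composition. Since $f(x) = h(P^{T}x)$ and $\Omega(x) := P^{T}x$ is linear (so $\Omega''\equiv 0$ and $\Omega'(x) = P^{T}$), Proposition~\ref{prop:hesscomp} applied with $\Theta = h$ and $y = 1$ collapses to $f''(x) = P\, h''(P^{T}x)\, P^{T}$. Substituting the expression for $h''$ obtained above immediately gives (\ref{eq:hessf}).

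There is no real obstacle here beyond careful index bookkeeping: one must verify that the index range $i = 1,\ldots,\ell$ in the sum and the products $\prod_{j=1}^{i-1}$ lining them up correctly coincide with what Proposition~\ref{prop:hessgencomp} delivers, and that $\overline{\mathrm{w}}^{i}$ from (\ref{eq:w}) becomes $\overline{\mathrm{v}}^{i}$ from (\ref{eq:v}) under the identification $y = e_{\ell}$, $k = \ell$. The argument is essentially a corollary in the literal sense: Proposition~\ref{prop:hessgencomp} does the heavy lifting, and the only extra ingredient is the vanishing of the second derivative of the linear selector $P^{T}$.
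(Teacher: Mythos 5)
Your proof is correct and essentially the same as the paper's: the paper applies Proposition~\ref{prop:hessgencomp} once to the composition of $\ell+1$ functions by taking $\Psi_0(x)=P^Tx$ as the innermost factor and using $\Psi_0'=P^T$, $\Psi_0''=0$, whereas you apply it to the inner $\ell$ factors and then peel off the linear map separately via Proposition~\ref{prop:hesscomp} --- a purely cosmetic repackaging resting on the same two facts. No gaps.
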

\begin{proof} Simply apply (\ref{eq:hessgeneral}) to the composition of $\ell+1$ functions, where $\Psi_i=\Phi_i$, for $i=1,\ldots, \ell$,
$\Psi_0(x)=P^T x$, and use the facts that $\Psi_0' = P^T$ and $\Psi_0''=0$.\end{proof}

As an application of (\ref{eq:hessf}), we have used it in \cite{GowerMelloTechReportSept2010} to show the
correctness of Griewank and Walther's reverse Hessian computation algorithm \cite[p. 157]{Griewank:2008}.
A number of other methods are also demonstrated using~(\ref{eq:hessf}), such as the forward Hessian mode, reverse
Hessian-vector products and a novel forward mode in~\cite{Gowermaster}.

\section{A new Hessian computation algorithm: \texttt{edge\_pushing}}\label{sec:ep}

\subsection{Development}
% ALTERACAO
%Expression (\ref{eq:hessf}) leads in a straightforward fashion to an algorithm, especially if we think in terms of block
%operations. Simply put, the sum in (\ref{eq:hessf}) is accumulated in a backward sweep as follows. The input is the tape
%$\mathcal{T}$, so at the iteration where node $i$ is swept, values of $\phi_j$, for all $j$, are known. The updating of the
%initially null square matrix $W$ of order $n+\ell$ is as follows
%\begin{eqnarray*}
%\mbox{Node $\ell$}&& W\leftarrow (\Phi_\ell')^T W \Phi_\ell'\\
% && W\leftarrow W+(\overline{\mathrm{v}}^\ell)^T \Phi_\ell''\\
%\mbox{Node $\ell-1$} && W\leftarrow (\Phi_{\ell-1}')^T W \Phi_{\ell-1}'\\
% && W\leftarrow W+(\overline{\mathrm{v}}^{\ell-1})^T \Phi_{\ell-1}''\\
% &\vdots &\\
% \mbox{Node $i$} &&W\leftarrow (\Phi_i')^T W \Phi_i'\\
% && W\leftarrow W+(\overline{\mathrm{v}}^i)^T \Phi_i''.\end{eqnarray*}
%Thus, 

In order to arrive at an algorithm to efficiently compute expression (\ref{eq:hessf}), it is helpful to think in terms of block operations. First of all, rewrite (\ref{eq:hessf}) as
\begin{equation}\label{eq:hessfexpanded1}
f'' = P W P^T=P\left(\sum_{i=1}^\ell W_i\right) P^T,\end{equation}
so the problem boils down to the computation of $W$. The summands in $W$ share a common structure, spelled out below for the $i$-th summand.
\begin{equation}\label{eq:structurei}
W_i=\underbrace{\left((\Phi'_1)^T\cdots (\Phi'_{i-1})^T\right)}_{\mbox{left multiplicand}}
\underbrace{\left((\overline{\mathrm{v}}^i)^T\Phi''_i\right) }_{\mbox{central multiplicand}}
\underbrace{\left(\Phi'_{i-1}\cdots \Phi'_{1}\right).}_{\mbox{ right multiplicand}}\end{equation}

Using the distributivity of multiplication over addition, the partial sum $W_i+W_{i-1}$ may be expressed as a three multiplicand
product\revisedtext{s} where the left and right multiplicands coincide with those in the expression of $W_{i-1}$, but the
central one is different.
\begin{eqnarray}
\lefteqn{W_i+W_{i-1} =} \nonumber\\
&&\kern-5pt \left((\Phi'_1)^T\cdots (\Phi'_{i-2})^T\right)\!\left((\Phi'_{i-1})^T((\overline{\mathrm{v}}^i)^T\Phi''_i)(\Phi'_{i-1}) +((\overline{\mathrm{v}}^{i-1})^T\Phi''_{i-1}) \right)\!\left(\Phi'_{i-2}\cdots \Phi'_{1}\right)\!.~ \label{eq:Wi+Wi+1}
\end{eqnarray}
Instead of calculating each $W_i$ separately, we may save effort by applying this idea to increasing sets of partial sums, all
the way to $W_\ell$. This alternative way of calculating $W$ is reminiscent of \revisedtext{Horner's scheme, that uses nesting
to efficiently calculate polynomials~\cite{Horner:1819}}.

 The nested expression for $\ell=3$ is given in
(\ref{eq:nested}) below.
\begin{equation}\label{eq:nested}
W=(\Phi'_1)^T\revisedtext{\left[}(\Phi'_2)^T ((\overline{\mathrm{v}}^{3})^T\Phi''_{3})\Phi'_2 +
(\overline{\mathrm{v}}^{2})^T\Phi''_{2}\revisedtext{\right]}
  \Phi'_1 + (\overline{\mathrm{v}}^{1})^T\Phi''_{1}.\end{equation}

Of course, the calculation of such a nested expression must begin at the innermost expression and proceed outwards. This means,
in this case, going from the highest to the lowest index. This is naturally accomplished in a backward sweep of the computational
graph, which could be schematically described as follows.
\begin{eqnarray*}
\mbox{Node $\ell$}&& W\leftarrow (\overline{\mathrm{v}}^\ell)^T \Phi_\ell''\\
\mbox{Node $\ell-1$} && W\leftarrow (\Phi_{\ell-1}')^T W \Phi_{\ell-1}'\\
 && W\leftarrow W+(\overline{\mathrm{v}}^{\ell-1})^T \Phi_{\ell-1}''\\
 &\vdots &\\
 \mbox{Node $i$} &&W\leftarrow (\Phi_i')^T W \Phi_i'\\
 && W\leftarrow W+(\overline{\mathrm{v}}^i)^T \Phi_i''\\
 &\vdots &\\
 \mbox{Node $1$} &&W\leftarrow (\Phi_1')^T W \Phi_1'\\
 && W\leftarrow W+(\overline{\mathrm{v}}^1)^T \Phi_1''. \end{eqnarray*}
In particular, node $\ell$'s iteration may be cast in the same format as the other ones if we initialize $W$ as a null matrix.

It follows that
\normalcolor
the value of $W$ at the end of the iteration where node $i$ is swept is given by
\[W = \sum_{k=i}^\ell \left(\prod_{j=i}^{k-1}(\Phi'_j)^T \right)((\overline{\mathrm{v}}^k)^T\Phi''_k) 
\left(\prod_{j=1}^{k-i}\Phi'_{k-j} \right).\]
Notice that, at the iteration where node $i$ is swept, both assignments involve derivatives of $\Phi_i$, which are available. The
other piece of information needed is the vector $\overline{\mathrm{v}}^i$, which we know how to calculate via a backward sweep
from Algorithm~1. Putting these two together, we arrive at Algorithm~3.

\begin{center}
\parbox{.6\textwidth}{%
\includegraphics[scale=.83
]{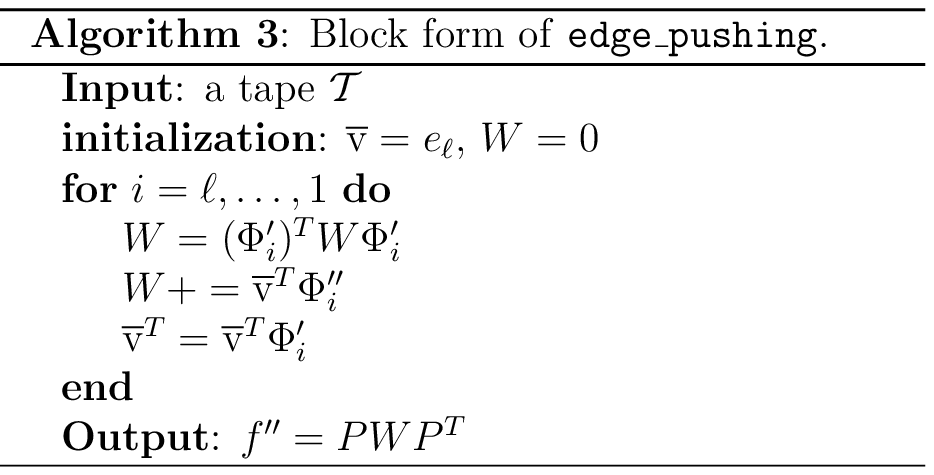}}
%\label{alg:edgepushingblock}
\end{center}

Before delving into the componentwise version of Algorithm~3, there is a key observation to be made about
matrix $W$, established in the following proposition.

\begin{proposition}
At the end of the iteration at which node $i$ is swept in Algorithm~3, for all $i$, the nonnull elements
of $W$ lie in the upper diagonal block of size $n+i-1$.
\end{proposition}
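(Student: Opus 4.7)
The plan is to proceed by reverse induction on the iteration index $i$, running from $i=\ell$ down to $i=1$. Recall from the description of Algorithm~3 that the iteration which sweeps node $i$ performs, in order, the congruence update $W \leftarrow (\Phi_i')^T W \Phi_i'$ followed by $W \leftarrow W + (\overline{\mathrm{v}}^i)^T \Phi_i''$. Combined with the initialization $W\leftarrow 0$, the inductive hypothesis I want is exactly the claim: after the iteration that sweeps node $i$, the support of $W$ is contained in the top-left $(n+i-1)\times(n+i-1)$ block, that is, in rows and columns indexed by $\{1-n,\ldots,i-1\}$.

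For the base case $i=\ell$, the first assignment is vacuous (since $W$ is zero), so $W=(\overline{\mathrm{v}}^\ell)^T\Phi_\ell''$. Since $\phi_\ell$ only depends on predecessors $j\prec\ell$ (all with index $<\ell$), the only nonzero slice of the tensor $\Phi_\ell''$ is the $\ell$-th one, and its nonzero entries occupy positions $(j,k)$ with $j,k<\ell$. This immediately places the support of $W$ in the top-left $(n+\ell-1)\times(n+\ell-1)$ block. The same structural fact, applied at a general index $i$, will take care of the additive update: $(\overline{\mathrm{v}}^i)^T\Phi_i''$ is supported in the top-left $(n+i-1)\times(n+i-1)$ block, so if the congruence has already produced a $W$ with that same support, the sum still does.

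The heart of the argument is therefore the congruence step: assuming $W$ is supported in the $(n+i)\times(n+i)$ top-left block after sweeping node $i+1$, I need to show that $(\Phi_i')^T W\Phi_i'$ is supported in the $(n+i-1)\times(n+i-1)$ top-left block. For this I will invoke the block form (\ref{eq:PhiJacobian}) of $\Phi_i'$, which has two features I will exploit in sequence: column $i$ of $\Phi_i'$ is identically zero, and rows indexed $>i$ of $\Phi_i'$ form the identity in the corresponding columns with no coupling back into columns $\le i$. Right-multiplying $W$ by $\Phi_i'$ then (i) kills column $i$ of the product, (ii) leaves columns $>i$ zero because they would pick up only entries of $W$ in rows $>i$, which vanish by hypothesis, and (iii) leaves rows $>i$ zero because $W$ has no nonzero rows there; hence $W\Phi_i'$ is supported in rows $\{1-n,\ldots,i\}$ and columns $\{1-n,\ldots,i-1\}$. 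Left-multiplying by $(\Phi_i')^T$ then symmetrically zeros out row $i$ (since column $i$ of $\Phi_i'$ is zero) and keeps rows $>i$ zero, while the entries $(c^i)_j$ in row $i$ of $\Phi_i'$ fold the contribution of the $i$-th row and column of $W\Phi_i'$ back into the $(n+i-1)\times(n+i-1)$ block via the predecessor weights.

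The main obstacle I anticipate is just bookkeeping: the matrices are indexed by $\{1-n,\ldots,\ell\}$, the block boundaries shift with $i$, and one has to separately check that rows $>i$, row $i$, and columns $>i$, column $i$ of the updated $W$ really do vanish, while the ``feedback'' from the $(c^i)_j$ entries deposits nothing outside the target block. Once these index-level verifications are recorded, combining the congruence and additive steps closes the induction and completes the proof.
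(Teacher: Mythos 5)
Your proposal is correct and follows essentially the same route as the paper: induction starting from the sweep of node $\ell$, using the block structure (\ref{eq:PhiJacobian}) of $\Phi_i'$ to show the congruence update $(\Phi_i')^T W \Phi_i'$ folds row and column $i$ back into the top-left $(n+i-1)$-block, and the predecessor structure of $\phi_i$ to handle the additive term $(\overline{\mathrm{v}}^i)^T\Phi_i''$; the paper simply writes out the $3\times 3$ block product explicitly where you describe its effect row-by-row and column-by-column. (One tiny wording slip: in your step (ii) the vanishing entries are those in \emph{columns} $>i$ of $W$, not rows, though the induction hypothesis covers both.)
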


\begin{proof} Consider the first iteration, at which node $\ell$ is swept. At the beginning $W$ is null, so the first block assignment
($(\Phi_\ell')^T W \Phi_\ell'$) does not change that. Now consider the assignment 
\[W\leftarrow W+(\overline{\revisedtext{\mathrm{v}}})^T \Phi_\ell''.\]
Using (\ref{def:vectortensorproduct}) and the initialization of $\overline{\mathrm{v}}$, we have
\[ (\overline{\mathrm{v}})^T \Phi_\ell'' = \bar{v}_\ell [\Phi_\ell]_\ell'' = [\Phi_\ell]_\ell'' ,\]
and, since $[\Phi_\ell]_\ell (y) = \phi_\ell (y_j)_{j\prec \ell}$, the nonnull entries of $[\Phi_\ell]_\ell''$ must have column
and row indices that correspond to predecessors of node $\ell$. This means the last row and column, of index $\ell$, are zero.
Thus the statement of the proposition holds after the first iteration.

Suppose by induction that, after node $i+1$ is swept, the last $\ell-i$ rows and columns of $W$ are null. Recalling
(\ref{eq:PhiJacobian}) and using the induction hypothesis, the matrix-product $(\Phi_i')^T W \Phi_i'$ can be written in block form
as follows:
\[
\kern-8pt\left[ \begin{array}{@{\hspace{7pt}}c@{\hspace{8pt}}|@{\hspace{5pt}}c@{\hspace{3pt}}|@{\hspace{11pt}}c@{\hspace{7pt}}} 
\rule{0pt}{23pt}\rule[-13pt]{0pt}{10pt}I & c^{i} & 0\\ 
\hline
0\rule{0pt}{12pt} & 0 & 0 \\ \hline
\rule{0pt}{23pt}\rule[-10pt]{0pt}{10pt}0 & 0 & I\end{array}\right]
\left[ \begin{array}{@{\hspace{7pt}}c@{\hspace{8pt}}|@{\hspace{5pt}}c@{\hspace{5pt}}|@{\hspace{11pt}}c@{\hspace{7pt}}} 
\rule{0pt}{23pt}\rule[-13pt]{0pt}{10pt}W_{1-n..i-1,1-n..i-1} & W_{1-n..i-1,i} &0\\ 
\hline
W_{i,1-n..i-1}\rule{0pt}{12pt} &  w_{ii} & 0 \\ \hline
\rule{0pt}{23pt}\rule[-10pt]{0pt}{10pt}0 & 0 & 0\end{array}\right]
\left[ \begin{array}{@{\hspace{5pt}}c@{\hspace{2pt}}|@{\hspace{5pt}}c@{\hspace{5pt}}|@{\hspace{11pt}}c@{\hspace{7pt}}} 
\rule{0pt}{23pt}\rule[-13pt]{0pt}{10pt}I & 0 & 0\makebox(0,10)[lc]{\footnotesize$\begin{array}{@{\hspace{.5cm}}c}1-n\\[-5pt]
\vdots \\[-3pt] i-1\end{array}$}\\ 
\hline
(c^i)^T\rule{0pt}{12pt} & 0 & 0\makebox(0,10)[lc]{\footnotesize\hspace{.5cm}row $i$,} \\ \hline
\rule{0pt}{23pt}\rule[-10pt]{0pt}{10pt}0 & 0 & I\makebox(0,10)[lc]{\footnotesize$\begin{array}{@{\hspace{.5cm}}c}i+1\\[-5pt]
\vdots \\[-3pt] \ell\end{array}$}\end{array}\right]\]
which results in
\begin{equation}\label{eq:Wblock}
\left[ \begin{array}{@{\hspace{2pt}}c@{\hspace{5pt}}|@{\hspace{5pt}}c@{\hspace{5pt}}|@{\hspace{11pt}}c@{\hspace{5pt}}} 
\rule{0pt}{23pt}\rule[-13pt]{0pt}{10pt}W_{1-n..i-1,1-n..i-1}+ c^{i}W_{i,1-n..i-1}+W_{1-n..i-1,i}\,
(c^i)^T\kern-2pt+ w_{ii}\,c^{i} (c^i)^T & 0 &
0\makebox(0,10)[lc]{\footnotesize$\begin{array}{@{\hspace{.5cm}}c}1-n\\[-5pt] \vdots \\[-3pt] i-1\end{array}$}\\ 
\hline
0 & 0 & 0\makebox(0,10)[lc]{\footnotesize\hspace{.5cm}row $i$.} \\ \hline
\rule{0pt}{23pt}\rule[-10pt]{0pt}{10pt}0 & 0 & 0\makebox(0,10)[lc]{\footnotesize$\begin{array}{@{\hspace{.5cm}}c}i+1\\[-5pt]
\vdots \\[-3pt] \ell\end{array}$}\end{array}\right]\rule{15pt}{0pt}
\end{equation}
Thus at this point the last $\ell-(i-1)$ rows and columns have been zeroed. 

Again using (\ref{def:vectortensorproduct}), we have
\[ (\overline{\mathrm{v}})^T \Phi_i'' = \bar{v}_i\left(\frac{\partial^2\phi_i}{\partial v_j\partial v_k}\right)_{1-n\leq j,k\leq \ell} ,\]
where the nonnull entries of the Hessian matrix on the right-hand side have column and row indices that correspond to predecessors of node $i$. Therefore,
the last $\ell-(i-1)$ rows and columns of this Hessian are also null. Hence the second and last block assignment involving $W$
will preserve this property, which, by induction, is valid till the end of the algorithm.\end{proof}

Using the definition of $c^{i}$ in (\ref{eq:definicaocT}), the componentwise translation in the first block
assignment involving $W$
in Algorithm~3 is
\begin{equation} \label{eq:componentpush}
 \left((\Phi_i ')^T W \Phi_i ' \right)_{jk} =
\left\{
\begin{array}{ll}\displaystyle
 w_{jk} + \frac{\partial \phi_i}{ \partial v_k}\frac{\partial \phi_i}{ \partial v_j }w_{ii} +
	\frac{\partial \phi_i}{ \partial v_k}w_{ji} + \frac{\partial \phi_i}{ \partial v_j }w_{ik},\quad  &\mbox{if } j<i \mbox{
and } k < i,\\
0,         &\mbox{otherwise.} \\
\end{array}
\right.
\end{equation}

For the second block assignment, using (\ref{def:vectortensorproduct}), we have that
\begin{equation}\label{eq:componentcreation}
\left((\overline{\mathrm{v}})^T \Phi_i'' \right)_{jk} = \left\{ \begin{array}{ll}\bar{v}_i \dfrac{\partial^2 \phi_i}{\partial v_j\partial
v_k},\quad &\mbox{if $j<i$ and $k<i$,}\\ 0,&\mbox{otherwise.}\end{array}\right.\end{equation}

Finally, notice that, since the componentwise version of the block assignment, done as node $i$ is swept, involves only entries
with row and column indices smaller than or equal to $i$, one does not need to actually zero out the row and column $i$ of $W$, as
these entries will not be used in the following iterations.

This componentwise assignment may be still simplified using symmetry, since $W$'s symmetry is preserved throughout
\texttt{edge\_pushing}. In order to avoid unnecessary calculations with symmetric counterparts, we employ the notation
$w_{\{ji\}}$ to denote both $w_{ij}$ and $w_{ji}$. Notice, however, that, when $j=k$ in (\ref{eq:componentpush}), we have
\[ \left((\Phi_i ')^T W \Phi_i ' \right)_{jj} = 
w_{jj} +\left( \frac{\partial \phi_i}{\partial v_j}\right)^2w_{ii} +  \frac{\partial \phi_i}{ \partial v_j }w_{ji} + 
\frac{\partial \phi_i}{ \partial v_j }w_{ij},\]
so in the new notation we would have
\[\left((\Phi_i ')^T W \Phi_i ' \right)_{\{jj\}} = w_{\{jj\}} +\left( \frac{\partial \phi_i}{\partial v_j}\right)^2w_{\{ii\}} + 
2\frac{\partial \phi_i}{ \partial v_j }w_{\{ji\}}.\]

The componentwise version of Algorithm~3 adopts the point of view of the node being swept. Say, for
instance that node $i$ is being swept. Consider the first block assignment
\[W\leftarrow (\Phi_i')^T W \Phi_i',\]
whose componentwise version is given in (\ref{eq:componentpush}).
Instead of focusing on updating each $w_{\{jk\}}$, $j,k < i$, at once, which would involve accessing $w_{\{ii\}}$, $w_{\{ji\}}$
and $w_{\{ik\}}$, we focus on each $w_{\{pi\}}$ at a time, and `push' its contribution to the appropriate $w_{\{jk\}}$'s. Taking
into account that the partial derivatives of $\phi_i$ may only be nonnull with respect to $i$'s predecessors, these appropriate
elements will be $w_{\{jp\}}$, where $j\prec i$, see the \texttt{pushing} step in Algorithm~4.

The second block assignment
\[W\leftarrow W+(\overline{\mathrm{v}}^i)^T \Phi_i'',\]
may be thought of as the creation of new contributions, that are added to appropriate entries and that will be pushed in later
iterations. From its componentwise version in (\ref{eq:componentcreation}), we see that only entries of $W$ associated with
predecessors of node $i$ may be changed in this step. The resulting componentwise version of the \texttt{edge\_pushing} algorithm
is Algorithm~4.
\begin{figure}
\centering
\includegraphics[scale=.83
]{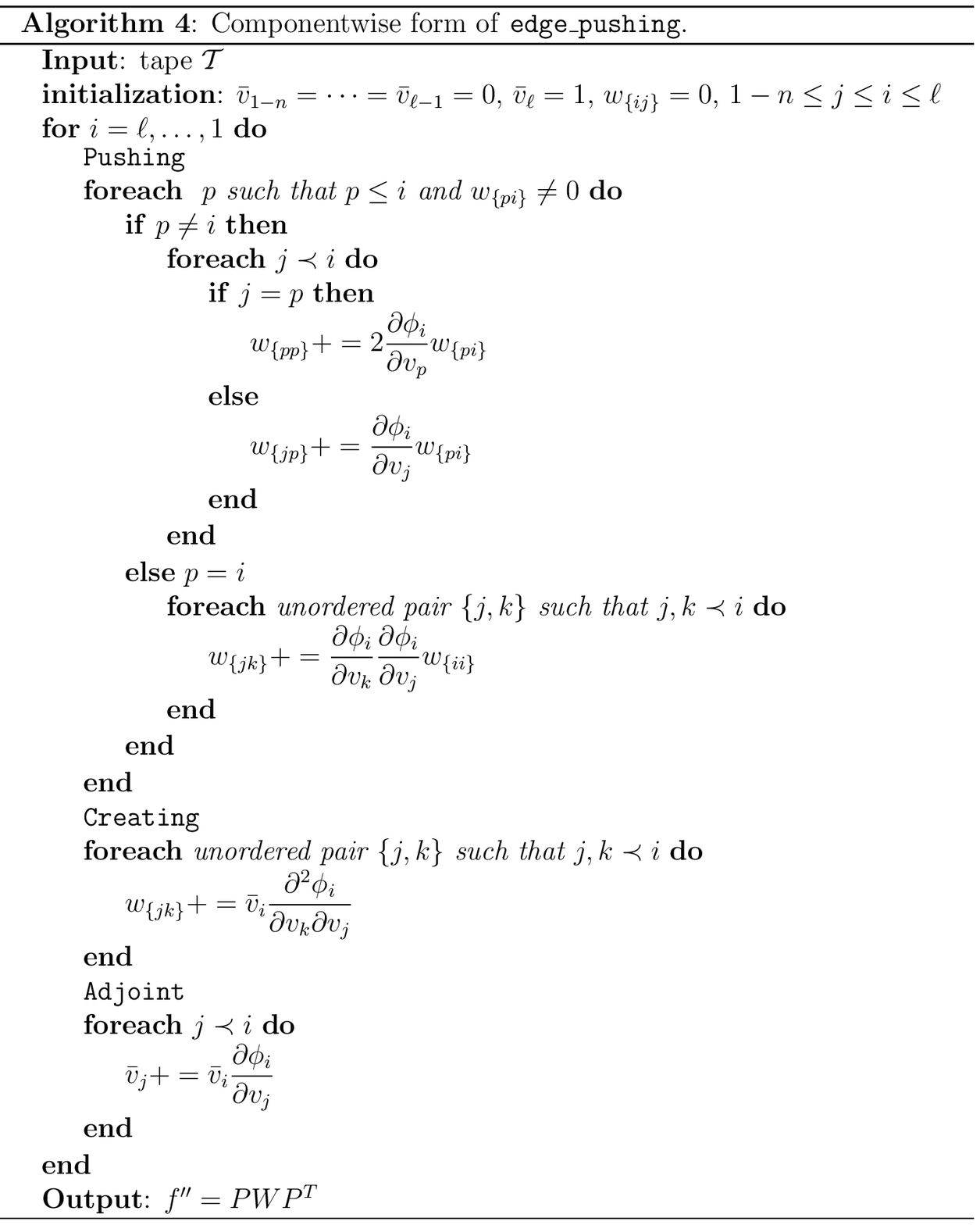}
% \label{alg:edgepushingcomponentwise}
\end{figure}

Algorithm~4 has a very natural interpretation in terms of the graph model introduced in Section~\ref{sec:HessianGraphModel}. The
nonlinear arcs are `created' and their weight initialized (or updated, if in fact they already exist) in the \texttt{creating}
step. In graph terms, the \texttt{pushing} step performed when node $i$ is swept actually pushes the endpoints of the nonlinear
arcs incident to node $i$ to its predecessors. The idea is that subpaths containing the nonlinear arc are replaced by shortcuts.
This follows from the fact that if a path contains the nonlinear arc $\{i,p\}$, then it must also contain precisely one of the
other arcs incident to node $i$. Figure~\ref{fig:shortcuts} illustrates the possible subpaths and corresponding shortcuts. In
cases I and III, the subpaths consist of two arcs, whereas in case \revisedtext{II}, three arcs are replaced by a new nonlinear
arc. Notice
that the endpoints of a loop (case II) may be pushed together down the same node, or split down different nodes.  In this way, the
contribution of each nonlinear arcs trickles down the graph, distancing the higher numbered nodes until it finally reaches the
independent nodes.

\begin{figure}
     \centering
\includegraphics[scale=.82
]{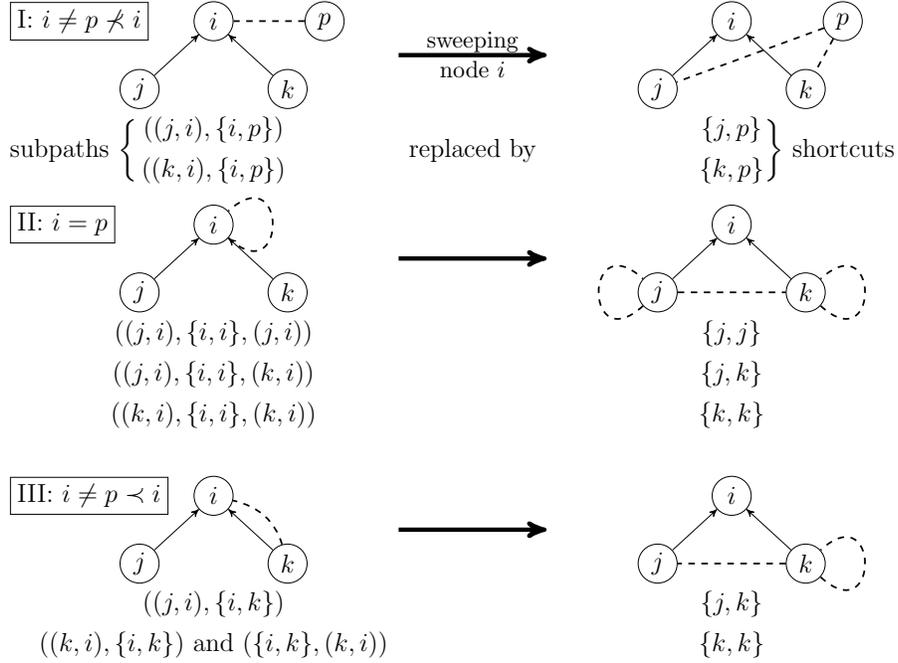}
  \caption{Pushing nonlinear arc $\{i,p\}$ is creating shortcuts.} \label{fig:shortcuts}
\end{figure}

This interpretation helps in understanding the good performance of \texttt{edge\_pushing} in the computational tests, in the sense
that only ``proven'' contributions to the Hessian (nonlinear arcs) are dealt with.

\subsection{Example}

In this section we run Algorithm~4 on one example, to better illustrate its workings. Since we're doing it on paper, we have the
luxury of doing it symbolically.

The iterations of \texttt{edge\_pushing} on a computational graph of the function $f(x)=(x_{-2} + e^{x_{-1}})(3x_{-1}+x_0^2)$ are
shown on Figure~\ref{fig:edgepushingexample}. The thick arrows indicate the sequence of three iterations. Nodes about to be swept
are highlighted. As we proceed to the graph on the right of the arrow, nonlinear arcs are created (or updated), weights are
appended to edges and adjoint values are updated, except for the independent nodes, since the focus is not gradient computation.
For instance, when node 3 is swept, the nonlinear arc $\{1,2\}$ is created. This nonlinear arc is pushed and split into two when
node 2 is swept, becoming nonlinear arcs $\{0,1\}$ and $\{-1,1\}$, with weights $1\cdot 2 v_0$ and $1\cdot 3$, respectively. When
node 1 is swept, the nonlinear arc $\{-1,1\}$ is pushed and split into nonlinear arcs $\{-2,-1\}$ and $\{-1,-1\}$, the latter with
weight $2\cdot 3\cdot e^{v_{-1}}$. Later on, in the same iteration, the nonlinear contribution of node 1,
$\partial^2\phi_1/\partial v_{-1}^2$, is added to the nonlinear arc $\{-1,-1\}$. Other operations are analogous. The Hessian can
be retrieved from the weights of the nonlinear arcs between independent nodes at the end of the algorithm:
\[f''(x) = \left(\begin{array}{*{3}{c}} 0 & 3 & 2 v_0\\
3 & e^{v_{-1}}(6+v_2) & 2 v_0 e^{v_{-1}}\\
 2 v_0 & 2 v_0 e^{v_{-1}} & 2 v_1\end{array}\right) = 
\left(\begin{array}{*{3}{c}}0 & 3 & 2 x_0\\
3 & e^{x_{-1}} ( 6 + 3 x_{-1} + x_0^2) & 2 x_0 e^{x_{-1}} \\ 
2x_0 & 2 x_0 e^{x_{-1}} & 2(x_{-2}+e^{x_{-1}}) \end{array}\right).\]
Notice that arcs that are pushed are deleted from the figure just for clarity purposes, though this is not explicitly done in
Algorithm~4. Nevertheless, in the actual implementation the memory locations corresponding to these arcs are indeed deleted, or,
in other words, made available, since this can be done in constant time.

\begin{figure}
\begin{center}
\rule{1.5cm}{0pt}
\includegraphics[scale=.82
]{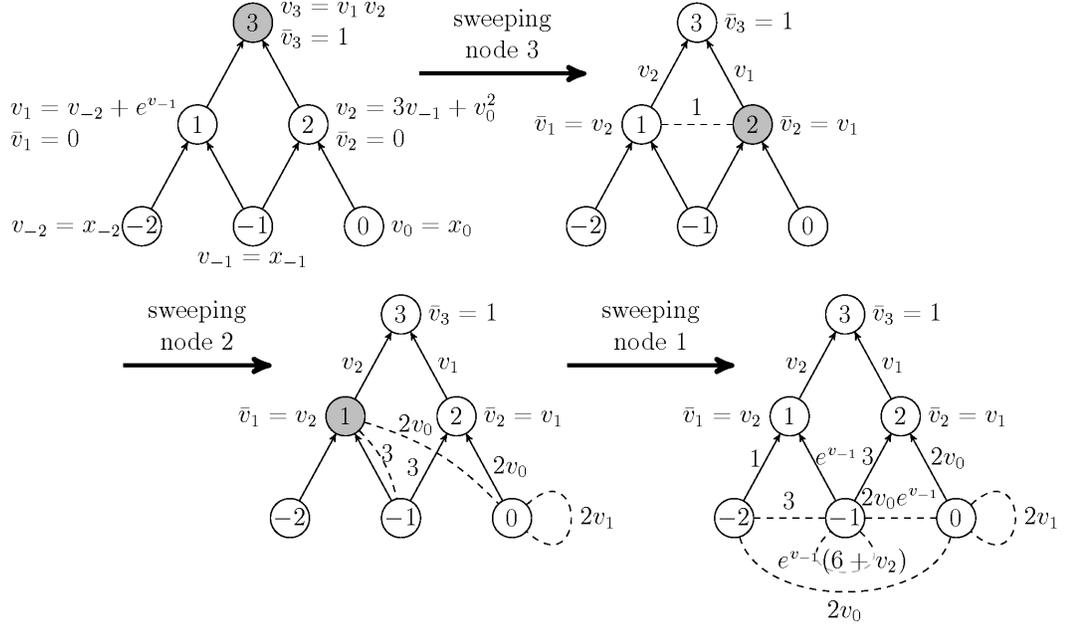}
\caption{\texttt{edge\_pushing} applied to a computational graph of $f(x) = (x_{-2} +
e^{x_{-1}})(3x_{-1}+x_0^2)$.}\label{fig:edgepushingexample}
\end{center}
\end{figure}

\subsection{\texttt{edge\_pushing} complexity bounds}

For our bounds we assume that the data structure used for $W$ in Algorithm~4 is an adjacency list. This is a structure appropriate
for large sparse graphs, which shall be our model for $W$, denoted by $G_W$. The entries in $W$ are interpreted as the set of arc
weights. Thus the nodes of $G_W$ are associated with the rows of $W$. Notice that this is the same as the set of nodes of the
computational graph. The support of $W$ is associated to the set of arcs of $G_W$. During the execution of the algorithm, new arcs
may be created during the \texttt{pushing} or the \texttt{creating} step. After node $i$ has been swept, $G_W$ has accumulated all
the nonlinear arcs that have been created or pushed, up to this iteration, since arcs are not deleted. On may think of $G_W$ as
the recorded history (creation and pushing) of the nonlinear arcs.

Denote by $N_i$ the set of neighbors of node $i$ in $G_W$ and by $d_i$ the degree of  node $i$. Of course the degree of node $i$
and  its neighborhood vary during the execution of the algorithm. The time for inserting or finding an arc $\{i,j\}$ and its
weight $w_{\{i,j\}}$ is bounded by $O(d_i+ d_j)$, where $d_i$ and $d_j$ are the degrees at the iteration where the operation takes
place. We assume that the set of elemental functions is composed of only unary and binary functions.

\subsubsection{Time complexity}

The time complexity of \texttt{edge\_pushing} depends on how many nonlinear arcs are allocated during execution. Thus it is
important to establish bounds for the number of arcs allocated to each node. Furthermore, we may fix $G_W^*$ as the graph obtained
at the end of the algorithm.

Let $d^*_i$ be the degree of node $i$ in $G_W^*$, and let $d^* = \max_i\{d_i^*\}$. Clearly $d_i\leq d^*_i$, where $d_i$ is the
degree of node $i$ in the graph $G_W$ at any given iteration. In order to bound the complexity of \texttt{edge\_pushing}, we
consider the \texttt{pushing} and \texttt{creating} steps separately. 
%We repeat in Figure~\ref{fig:pushingcomplexity} the possible cases of pushing, and the corresponding time complexity bounds. 

Studying the cases spelled out in Figure~\ref{fig:shortcuts}, one concludes that the time spent in pushing edge $\{i,p\}$ is
bounded by, in each case: \\[8pt]
\centerline{\begin{tabular}{l|l} 
\hline
Case & Upper bound for time spent\\ \hline
I: $i \neq p \nprec i $ & $d_j^* + 2d_p^* + d_k^*$\\
II: $i=p$ & $d_j^* + 2d_k^* $\\
III: $i \neq p \prec i$ & $d_j^* + 2d_k^* $ \\ \hline
\end{tabular}}\\[8pt]
Hence $2(d_j^* + d_p^* + d_k^*)$ is a common bound, where $j$ and $k$ are predecessors of node $i$. Since there are at most
$d_i^*$ nonlinear arcs incident to node $i$, the time spent in the \texttt{pushing} step at the iteration where node $i$ is swept
is bounded by
\[ d_i^*(2(d_j^* + d_p^* + d_k^*)) = O(d_i^*(d_j^* + d_p^* + d_k^*))=O(d_i^*d^*).\]

Finally, the assumption that all functions are either unary or binary implies that at most three nonlinear arcs are allocated
during the \texttt{creating} step, for each iteration of \texttt{edge\_pushing}. Hence the time used up in this step at the
iteration where node $i$ is swept is bounded by
\[2(d_j^*+d_k^*) = O(d_j^*+d_k^*) = O(d^*),\]
where $j$ and $k$ are predecessors of node $i$.

Thus, taking into account the time spent in merely visiting a node --- say, when the intermediate function associated with the
node is linear --- is constant, the time complexity of \texttt{edge\_pushing} is
\begin{eqnarray}
\mbox{TIME(\texttt{edge\_pushing})} &\leq &\sum_{i=1}^\ell (d_i^* d^*+d^*+1) \nonumber\\
&=&O\left(d^* \sum_{i=1}^\ell d_i^*+ \ell\right). \label{eq:timecomplexity}
\end{eqnarray}

A consequence of this bound is that, if $f$ is linear, the complexity of \texttt{edge\_pushing} is that of the function
evaluation, a desirable property for Hessian algorithms.

\section{Computational experiments}\label{sec:computationalexperiments}
All tests were run on the 32-bit operating system Ubuntu 9.10, processor Intel 2.8~GHz, and 4~GB of RAM. All algorithms were coded
in C and C++. The algorithm \texttt{edge\_pushing} has been implemented as a driver of ADOL-C, and uses the taping and operator
overloading functions of ADOL-C \cite{ADOL-C}. The tests aim to establish a comparison between \texttt{edge\_pushing} and two
algorithms, available as drivers of ADOL-C v. 2.1, that constitute a well established reference in the field. These algorithms
incorporate the graph coloring routines of the software package \emph{ColPack}~\cite{Gebremedhin:inpreparation} and
the sparsity detection and Hessian-vector product procedures of ADOL-C~\cite{AndreaSparsity}. We shall denote them by the name of
the coloring scheme employed: Star and Acyclic.  Analytical properties of these algorithms, as well as numerical experiments with
them, have been reported in
\cite{AndreaSparsity,GebreHessianAuto}.

We have hand-picked fifteen functions from the CUTE collection \cite{Cute} and one --- \textsf{augmlagn} --- from \cite{Hock:1980} for the experiments. The selection was based on the following criteria: Hessian's sparsity pattern, scalability and sparsity. We wanted to cover a variety of patterns; to be able to freely change the scale of the function, so as to appraise the performance of the algorithms as the dimension grows; and we wanted to work with sparse matrices. The appendix of \cite{GowerMelloTechReportSept2010} presents results for dimension values $n$ in the set $5\,000$, $20\,000$, $50\,000$ and $100\,000$, but the tables in this section always refer to the $n=50\,000$ case, unless otherwise explicitly noted.

The list of functions is presented in Table~\ref{tab:testexamples}. The `Pattern' column indicates the type of sparsity pattern:
bandwidth\footnote{The bandwidth of matrix $M=(m_{ij})$ is the maximum value of $|i-j|$ such that
$m_{ij}\neq 0$.} (B $x$), arrow, box, or irregular pattern. The last two display the number of columns of the \emph{seed matrix}
produced by Star and Acyclic, for dimension equal to 50\kern2pt000. In order to report the performance of these algorithms, we
briefly recall their \emph{modus operandi}. Their first step, executed only once, computes a seed matrix $S$ via coloring methods,
such that the Hessian $f''$ may be recovered from the product $f''S$, which involves as many Hessian-vector products as the number
of columns of $S$. The latter coincides with the number of colors used in the coloring of a graph model of the Hessian. The
recovery of the Hessian boils down to the solution of a linear system. Thus the first computation of the Hessian takes necessarily
longer, because it comprises two steps, where the first one involves the coloring, and the second one deals with the calculation
of the actual numerical entries. In subsequent Hessian computations, only the second step is executed, resulting in a shorter run.
It should be noted that the number \revisedtext{of} colors is practically insensitive to changes in the dimension of the function
in the examples considered, with the exception of the functions with irregular patterns, \textsf{noncvxu2} and \textsf{ncvxbqp1}.

\begin{table}
\centering
\begin{tabular}{l|c|cc}	\hline								
&&\multicolumn{2}{c}{\#\ colors}\\
Name	&	Pattern	&	Star	&   Acyclic		\\	\hline \hline
\textsf{cosine} 	&	B 1	&	~3	&	2		\\	
\textsf{chainwoo}	&	B 2	&	~3	&	3		\\	
\textsf{bc4}		&	B 1		&	~3	&	2		\\	
\textsf{cragglevy}	&	B 1		&	~3	&	2		\\	
\textsf{pspdoc}	&	B 2	&	~5	&	3		\\	
\textsf{scon1dls}	&	B 2	&	~5	&	3		\\	
\textsf{morebv}	&	B 2	&	~5	&	3		\\	
\textsf{augmlagn}	&	$5\times5$ diagonal blocks		&	~5	&	5		\\	
\textsf{lminsurf}	&	B 5	&	11	&	6		\\	
\textsf{brybnd}	&	B 5	&	13	&	7		\\	
\textsf{arwhead}	&	arrow	&	~2	&	2		\\	
\textsf{nondquar}	& arrow + B 1 	&	~4	&	3		\\	
\textsf{sinquad}	&	frame + diagonal	&	~3	&	3		\\	
\textsf{bdqrtic}	& arrow + B 3 &	~8	&	5		\\	
\textsf{noncvxu2}	&	irregular	&	12	&	7		\\	
\textsf{ncvxbqp1}	&	irregular	&	12	&	7		\\	\hline
\end{tabular}									
\caption{Test functions}							
\label{tab:testexamples}														
\end{table}

Table~\ref{tab:starXacyclicXep} reports the times taken by \texttt{edge\_pushing} and by the first and second Hessian
computations by Star and Acyclic. It should be pointed out that Acyclic failed to recover the Hessian of \textsf{ncvxbqp1}, the
last function in the table. In the examples where \texttt{edge\_pushing} is faster than the second run of Star (resp., Acyclic),
we can immediately conclude that \texttt{edge\_pushing} is more efficient for that function, at that prescribed dimension. This
was the case in 14 (resp., 16) examples.  However, when the second run is faster than \texttt{edge\_pushing}, the corresponding
coloring method may eventually win, if the Hessians are computed a sufficient number of times, so as to compensate the initial
time investment. This of course depends on the context in which the Hessian is used, say in a nonlinear optimization code. Thus
the number of evaluations of Hessians is linked to the number of iterations of the code. The minimum time per example is
highlighted in Table~\ref{tab:starXacyclicXep}.

\begin{table}
\centering
\includegraphics[scale = 0.83]{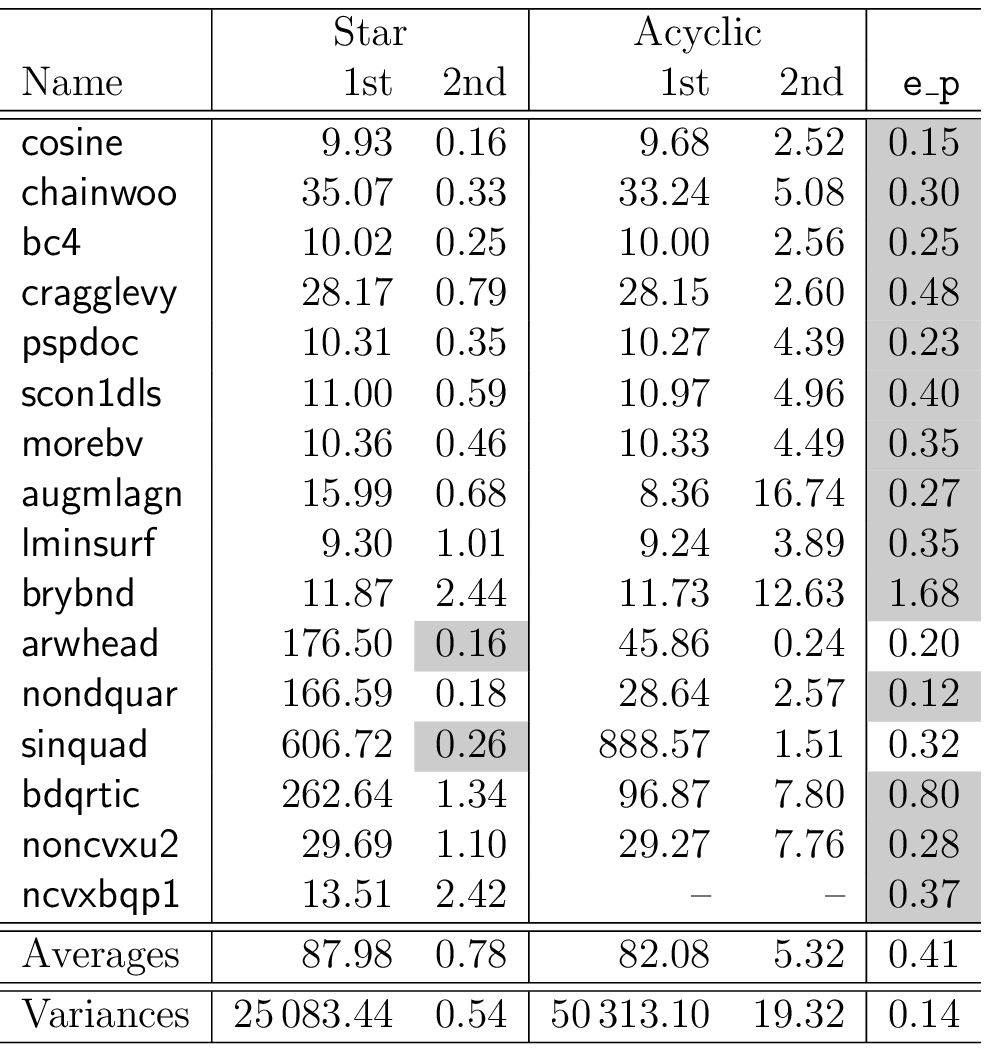}	
\caption{Runtimes in seconds for Star, Acyclic and \texttt{edge\_pushing}.}
\label{tab:starXacyclicXep}
\end{table}

Focusing on the two-stage Hessian methods, we see that Star always has fastest second runtimes. Only for function \textsf{sinquad}
is Star's first run faster than Acyclic's. Nevertheless, this higher investment in the first run is soon paid off, except for
functions \textsf{arwhead}, \textsf{nondquar} and \textsf{bdqrtic}, where it would require over 1600, 50 and 25, respectively,
computations of the Hessian to compensate the slower first run.  We can also see from Tables~\ref{tab:testexamples}
and~\ref{tab:starXacyclicXep} that Star's performance on the second run suffers the higher the number of colors needed to color
the Hessian's graph model, which is to be expected. Thus the second runs of \textsf{lminsurf}, \textsf{brybnd}, \textsf{bdqrtic},
\textsf{noncvxu2} and \textsf{ncvxbqp1} were the slowest of Star's. Notice that, although the Hessian of \textsf{bdqrtic} doesn't
require as many colors as the other four just mentioned, the function evaluation itself takes longer.

On a contrasting note, \texttt{edge\_pushing} execution is not tied to sparsity patterns and thus this algorithm proved to be more
robust, depending more on the density and number of nonlinear functions involved in the calculation. In fact, this is confirmed
by looking at the variance of the runtimes for the three algorithms, see the last row of Table ~\ref{tab:starXacyclicXep}. Notice
that \texttt{edge\_pushing} has the smallest variance. Furthermore, although Star was slightly faster than \texttt{edge\_pushing}
in the second run for the functions  \textsf{arwhead} and \textsf{sinquad}, the time spent in the first run was such that it would
require over 4\kern2pt000 and 10\kern2pt000, respectively, evaluations of the Hessian to compensate for the slower first run.

The bar chart in Figure~\ref{fig:barchartstarep}, built from the data in Table~\ref{tab:starXacyclicXep}, permits a graphical comparison of the performances of Star and \texttt{edge\_pushing}. Times for function \textsf{brybnd} deviate sharply from the remaining ones, it was a challenge for both methods. On the other hand, function \textsf{ncvxbq1} presented difficulties to Star, but not to \texttt{edge\_pushing}.

\begin{figure}
\centering
\includegraphics[width =\textwidth]{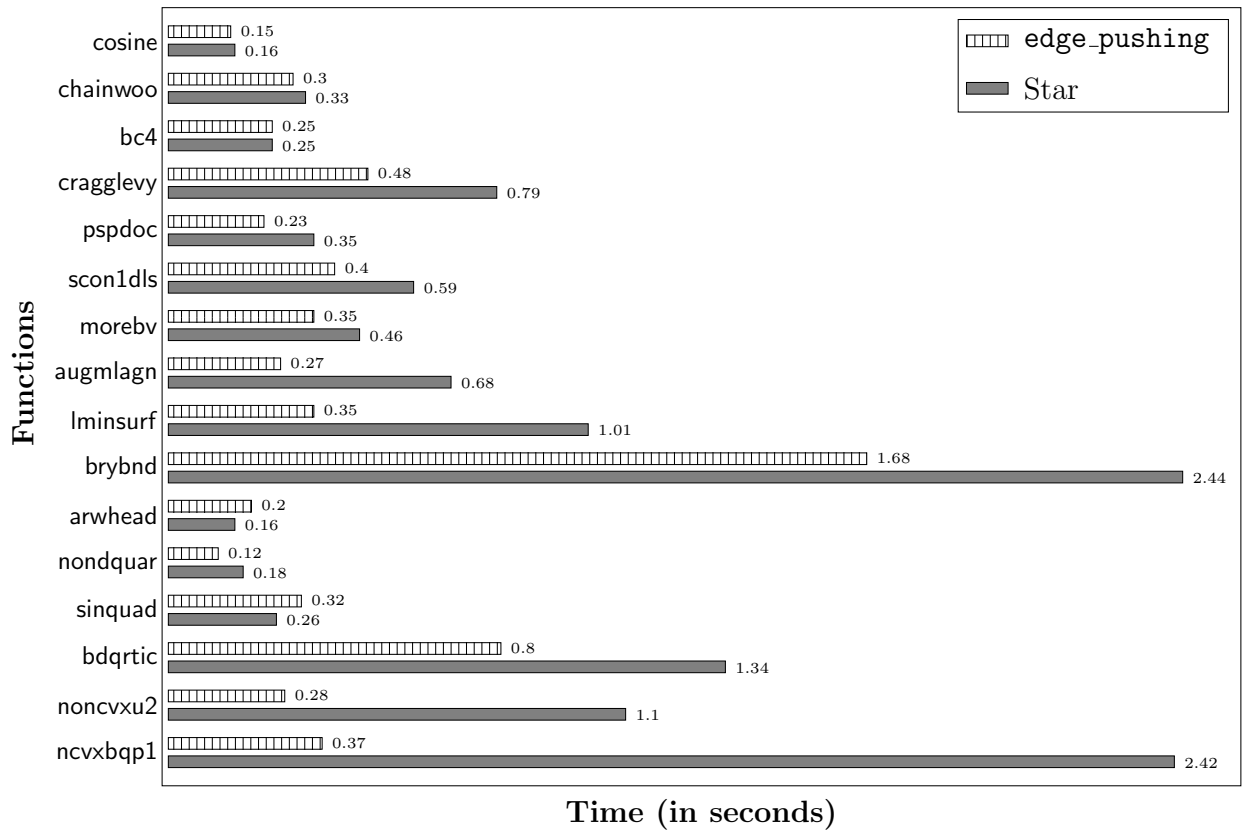}
\caption{Graphical comparison: Star versus \texttt{edge\_pushing}.}\label{fig:barchartstarep}
\end{figure}

The bar chart containing the runtimes of the three algorithms is made pointless by the range of runtimes of
Acyclic, much bigger than the other two. To circumvent this problem, we applied the base 10 log to the runtimes multiplied by 10
(just to make all logs positive). The resulting chart is depicted in Figure~\ref{fig:barchartacyclicstarep}.

\begin{figure}
\centering
\includegraphics[width =\textwidth]{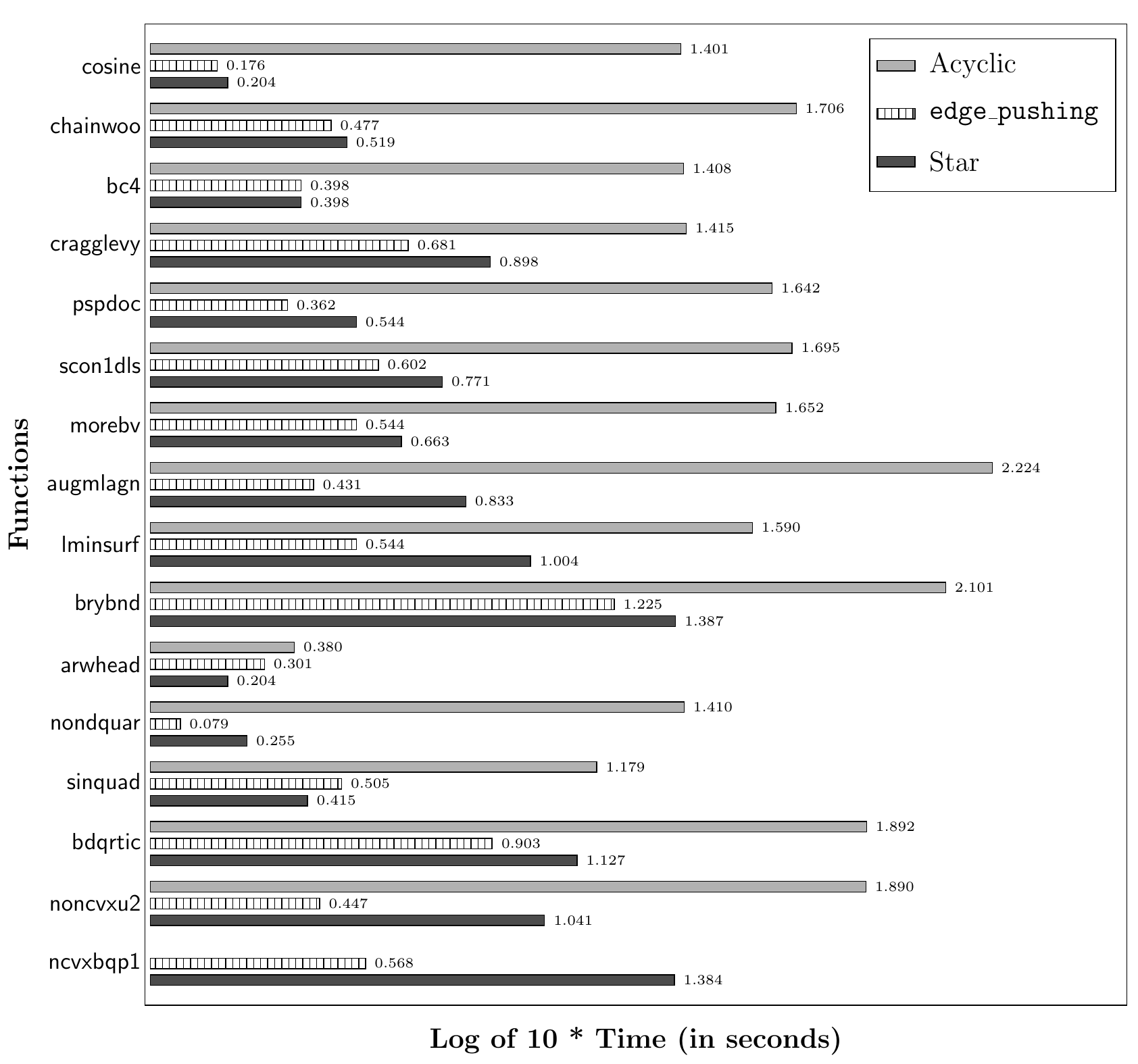}
\caption{Graphical comparison of times in log scale: Star, Acyclic and \texttt{edge\_pushing}.}\label{fig:barchartacyclicstarep}
\end{figure}

Although the results presented in Table~\ref{tab:starXacyclicXep} correspond to the dimension 50\kern2pt000 case, they
represented the general behavior of the algorithms in this set of functions. This is evidenced by the plots in
Figures~\ref{fig:sinecosdim} and~\ref{fig:brynondim}, that show the runtimes of \texttt{edge\_pushing} and Star on four functions
for dimensions varying from 5\kern2pt000 to 100\kern2pt000.

The functions \textsf{cosine}, \textsf{sinquad}, \textsf{brybnd} and \textsf{noncvxu2} were selected for these plots because they
exemplify the different phenomena we observed in the 50\kern2pt000 case. For instance, the performances of both
\texttt{edge\_pushing} and Star  are similar in the functions \textsf{cosine} and \textsf{sinequad}, and this has happened
consistently in all dimensions. Thus the dashed and solid lines in Figure~\ref{fig:sinecosdim} intertwine, and there is no
striking dominance of one algorithm over the other. Also, these functions presented no real challenges, and the runtimes in all
dimensions are low.

\begin{figure}
\centering
\includegraphics[width =11cm]{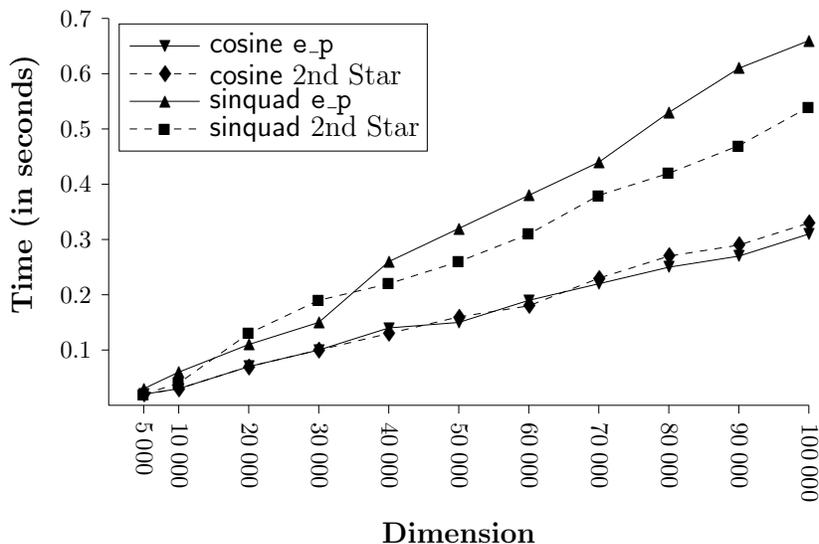}
\caption{Evolution of runtimes of \texttt{edge\_pushing} and Star (2nd run) with respect to dimension, for \textsf{cosine} and
\textsf{sinequad}.}\label{fig:sinecosdim}
\end{figure}

The function \textsf{brybnd} was chosen because it presented a challenge to all methods, and \textsf{ncvxu2} is the representative
of the functions with irregular Hessian sparsity patterns. The plots in Figure~\ref{fig:brynondim} show a consistent superiority
of \texttt{edge\_pushing} over Star for these two functions. All plots are close to linear, with the exception of the runtimes of
Star for the function \textsf{noncvxu2}. We observed that the  number of colors used to color the graph model of its Hessian
varied quite a bit, from 6 to 21. This highest number occurred precisely for the dimension 70\kern2pt000, the most
dissonant point in the series.

\begin{figure}
\centering
\includegraphics[width =11cm]{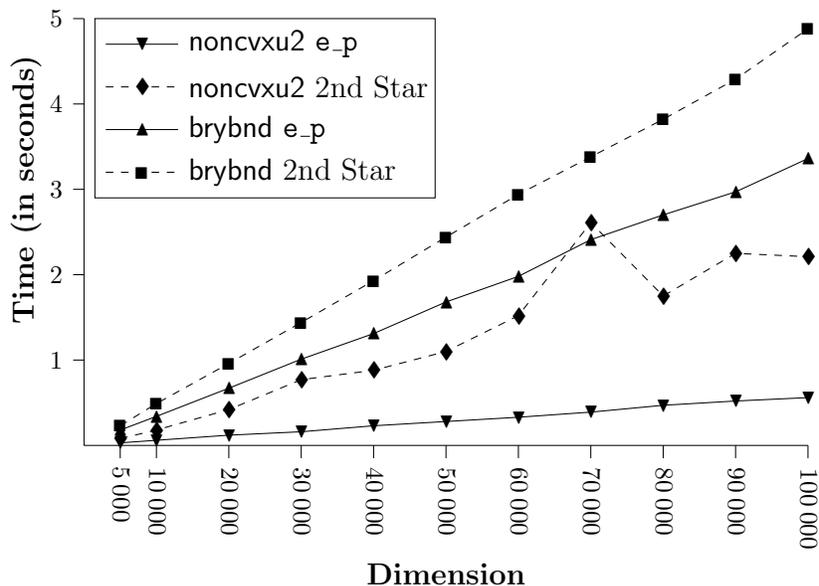}
\caption{Evolution of runtimes of \texttt{edge\_pushing} and Star (2nd run) with respect to dimension, for \textsf{noncvxu2} and
\textsf{brybnd}.}\label{fig:brynondim}
\end{figure}

The appendix of \cite{GowerMelloTechReportSept2010} contains the runtimes for the three methods, including first and second runs, for all functions, for dimensions 5\kern2pt000, 20\kern2pt000 and 100\kern2pt000.

\section{Conclusions and future research}\label{sec:conclusions}

The formula (\ref{eq:hessf}) for the Hessian obtained in Section~\ref{sec:HessianFormula} leads to new correctness proofs for existing Hessian computation algorithms and to the development of new ones. We also provided a graph model for the Hessian computation and both points of view inspired the construction of \texttt{edge\_pushing}, a new algorithm for Hessian computation that conforms to Griewank and Walther's Rule 16 of Automatic Differentiation \cite[p. 240]{Griewank:2008}:\\[8pt]
\centerline{\parbox{.67\textwidth}{\it%
The calculation of gradients by nonincremental reverse makes the corresponding computational graph 
symmetric, a property that should be exploited and maintained in accumulating Hessians. 
}}
\medskip

The new method is a truly reverse algorithm that exploits the symmetry and sparsity of the Hessian. It is a one-phase algorithm,
in the sense that there is no preparatory run where a sparsity pattern needs to be calculated that will be reused in all
subsequent iterations. This can be an advantage if the function involves many intermediate functions whose second derivatives are
zero in a sizable region, for instance $h(u)=(\max\{-u,0\})^2$. This type of function is used as a differentiable penalization of
the negative axis. It is not uncommon to observe the `thinning out' of Hessians over the course of nonlinear optimization, as the
iterations converge to an optimum, which obviously lies in the feasible region. If the sparsity structure is fixed at the
beginning, one cannot take advantaged of this slimming down of the Hessian.

 \texttt{edge\_pushing} was implemented as a driver of ADOL-C\cite{ADOL-C} and tested against two other algorithms, the Star and
Acyclic methods of ColPack~\cite{Gebremedhin:inpreparation}, also available as drivers of ADOL-C. Computational experiments were
run on sixteen functions of the CUTE collection~\cite{Cute}. The results show the strong promise of the new algorithm. When
compared to Star, there is a clear advantage of \texttt{edge\_pushing} in fourteen out of the sixteen functions. In the remaining
two the situation is unclear, since Star is a two-stage method and the first run can be very expensive. So even if its second run
is faster than \texttt{edge\_pushing}'s, one should take into account how many evaluations are needed in order to compensate the
first run. The answers regarding the functions \textsf{arwhead} and \textsf{sinquad} were over 4\kern2pt000 and 10\kern2pt000,
respectively, for dimension equal to 50\kern2pt000. These numbers grow with the dimension. Finally, it should be
noted that \textsf{edge\_pushing}'s performance was the more robust, and it wasn't affected by the lack of regularity in the
Hessian's pattern. 
 
We observed that Star was consistently better than Acyclic in all computational experiments. However, Gebremedhin et al. \cite{GebreHessianAuto} point out that Acyclic was better than Star in randomly generated Hessians and the real-world power transmission problem reported therein, while the opposite was true for large scale banded Hessians. It is therefore mandatory to test \texttt{edge\_pushing} not only on real-world functions, but also within the context of a real optimization problem. Only then can one get a true sense of the impact of using different algorithms for Hessian computation.

It should be pointed out that the structure of \texttt{edge\_pushing} naturally lends itself to parallelization, a task already underway. The opposite seems to be true for Star and Acyclic. The more efficient the first run is, the less colors, or columns of the seed matrix one has, and only the task of calculating the Hessian-vector products corresponding to $f''S$ can be seen to be easily parallelizable.

Another straightforward consequence of \texttt{edge\_pushing} is a sparsity pattern detection algorithm. This has already been implemented and tested, and will be the subject of another report.

% \bibliographystyle{plain}	% (uses file "plain.bst")
% \bibliography{../rob}		% expects file "myrefs.bib"

\begin{thebibliography}{10}

\bibitem{Gower:2012}
R. M. Gower and M. P. Mello.
\newblock A new framework for the computation of Hessians.
\newblock {\em Optimization Methods and Software}, 27(2):251--273, 2012 .


\bibitem{Abate:1997}
Jason Abate, C.~Bischof, Lucas Roh, and Alan Carle.
\newblock Algorithms and design for a second-order automatic differentiation
  module.
\newblock In {\em Proceedings of the 1997 {I}nternational {S}ymposium on
  {S}ymbolic and {A}lgebraic {C}omputation ({K}ihei, {HI})}, pages 149--155
  (electronic), New York, 1997. ACM.

\bibitem{Bauer_1974}
F.~L. Bauer.
\newblock Computational graphs and rounding errors.
\newblock {\em SIAM Journal of Numerical Analysis}, 11(1):87–96, 1974.

\bibitem{Bhowmick:2008}
S.~Bhowmick and P.~D. Hovland.
\newblock A polynomial-time algorithm for detecting directed axial symmetry in
  {H}essian computational graphs.
\newblock In Christian~H. Bischof, H.~Martin B{\"u}cker, Paul~D. Hovland, Uwe
  Naumann, and J.~Utke, editors, {\em Advances in Automatic Differentiation},
  pages 91--102. Springer, 2008.

\bibitem{Bischof:1993}
C.~Bischof, G.~Corliss, and A.~Griewank.
\newblock Structured second-and higher-order derivatives through univariate
  \revisedtext{T}aylor series.
\newblock {\em Optimization Methods and Software}, 2(3):211--232, 1993.

\bibitem{Cute}
I.~Bongartz, A.~R. Conn, Nick Gould, and Ph.~L. Toint.
\newblock Cute: constrained and unconstrained testing environment.
\newblock {\em ACM Trans. Math. Softw.}, 21(1):123--160, 1995.

\bibitem{knitro}
R.~H. Byrd, J.~\revisedtext{Nocedal}, and R.~A. Waltz.
\newblock Knitro: An integrated package for nonlinear optimization.
\newblock In {\em Large Scale Nonlinear Optimization, 35--59, 2006}, pages
  35--59. Springer Verlag, 2006.

\bibitem{Christianson:1992}
B.~Christianson.
\newblock Automatic {H}essians by reverse accumulation.
\newblock {\em IMA J. Numer. Anal.}, 12(2):135--150, 1992.

\bibitem{Dix91}
L. C. W. Dixon.
\newblock Use of automatic differentiation for calculating Hessians and Newton steps.
\newblock In Andreas Griewank and George F. Corliss, editors, Automatic Differentiation of Algorithms: Theory,
Implementation, and Application, pages 114–125. SIAM, Philadelphia, PA, 1991.

\bibitem{Forsgreninteriormethods}
A.~Forsgren, P.~E. Gill, and M.~H. Wright.
\newblock Interior methods for nonlinear optimization.
\newblock {\em SIAM Review}, 44:525--597, 2002.

\bibitem{GebreHessianAuto}
A.~H. Gebremedhin, A.~Tarafdar, A.~Pothen, and A.~Walther.
\newblock Efficient computation of sparse \revisedtext{H}essians using coloring and automatic
  differentiation.
\newblock {\em INFORMS J. on Computing}, 21(2):209--223, 2009.

%\bibitem{colpackweb}
%Assefaw~H. Gebremedhin, Arijit Tarafdar, Duc Nguyen, and Alex Pothen.
%\newblock Colpack, \emph{http://www.cscapes.org/coloringpage/}, 2010.

\bibitem{Gebremedhin:inpreparation}
A.H. Gebremedhin, D. Nguyen, M.M.A Patwary, and A. Pothen. 
\newblock ColPack: Graph coloring software for derivative computation and beyond.
\newblock Submitted to ACM Trans. on Math. Softw., 2010.

%\bibitem{Gebremedhin:inpreparation}
%Assefaw~H. Gebremedhin, Arijit Tarafdar, and Alex Pothen.
%\newblock Colpack: A graph coloring package for supporting sparse derivative
%  matrix computation.
%\newblock 2008.

\bibitem{Gowermaster}
R.~M. Gower.
\newblock Hessian matrices via automatic differentiation.
\newblock Master's Dissertation, Department of Applied Mathematics, Institute of Mathematics, Statistics and
  Scientific Computing, Unicamp, 2011. In preparation.

\bibitem{GowerMelloTechReportSept2010}
R.~M. Gower and M.~P. Mello.
\newblock Hessian matrices via automatic differentiation.
\newblock Technical report, Institute of Mathematics, Statistics and Scientific
  Computing, Unicamp, 2010.

\bibitem{ADOL-C}
A.~Griewank, D.~Juedes, H.~Mitev, J.~Utke, O.~Vogel, and A.~Walther.
\newblock {ADOL-C}: A package for the automatic differentiation of algorithms
  written in {C/C++}.
\newblock Technical report, Institute of Scientific Computing, Technical
  University Dresden, 1999.
\newblock Updated version of the paper published in {\it ACM Trans. Math.
  Software} 22, 1996, 131--167.

\bibitem{Griewank:2008}
A.~Griewank and A.~Walther.
\newblock {\em Evaluating derivatives}.
\newblock Society for Industrial and Applied Mathematics (SIAM), Philadelphia,
  PA, second edition, 2008.
\newblock Principles and techniques of algorithmic differentiation.

\bibitem{Hock:1980}
W.~Hock and K.~Schittkowski.
\newblock {Test examples for nonlinear programming codes}.
\newblock {\em Journal of Optimization Theory and Applications},
  30(1):127--129, 1980.

\bibitem{Horner:1819}
W.~G.~Horner.
\newblock {A new method of solving numerical equations of all orders by continuous approximation}.
\newblock {\em Philosophical Transactions of the Royal Society of London},
  109:308-335, 1819.

\bibitem{McCormickJacksonFactorable}
R.~H.~F. Jackson and G.~P. McCormick.
\newblock The polyadic structure of factorable function tensors with
  applications to high-order minimization techniques.
\newblock {\em J. Optim. Theory Appl.}, 51(1):63--94, 1986.

\bibitem{Stewart:2007}
James Stewart.
\newblock {\em Multivariable Calculus}.
\newblock Brooks Cole, 2007.

\bibitem{VanderbeiInteriorPoint}
R.~J. Vanderbei and D.~F. Shanno.
\newblock An interior-point algorithm for nonconvex nonlinear programming.
\newblock {\em Computational Optimization and Applications}, 13:231--252, 1997.

\bibitem{BieglerLineSearch}
A.~W{\"a}chter and L.~T. Biegler.
\newblock On the implementation of an interior-point filter line-search
  algorithm for large-scale nonlinear programming.
\newblock {\em Math. Program.}, 106(1):25--57, 2006.

\bibitem{AndreaSparsity}
A.~Walther.
\newblock Computing sparse \revisedtext{H}essians with automatic differentiation.
\newblock {\em ACM Trans. Math. Softw.}, 34(1):1--15, 2008.


\end{thebibliography}
% \end{document}

\end{document}